\newtheorem{theorem}{Theorem}
\newtheorem{lemma}{Lemma}
\newtheorem{proposition}{Proposition}
\title{Twisting in One-Dimensional Periodic Vlasov-Poisson System}
\author{Sangwook Tae}
\numberwithin{equation}{section}
\numberwithin{theorem}{section}
\numberwithin{lemma}{section}
\numberwithin{proposition}{section}
\begin{document}
	\maketitle
	\begin{abstract}
		We prove that twisting and filamentation occur near a family of stable steady states for a one-dimensional periodic Vlasov-Poisson system, which describes the electron dynamics under a fixed ion background. More precisely, we establish the growth in time of the $L^1$ norm of the gradient for the electron distribution function and the corresponding flow map in the phase space. To support this result, we prove the existence results of stable steady states for a class of ion densities on the torus.
	\end{abstract}
	\section{Introduction}
	\subsection{Main Results}
	In this paper, we consider the one-dimensional Vlasov-Poisson system on $\mathbb{T}=\mathbb{R}/\mathbb{Z}$ with a given nonnegative background charge density $\rho$:
	\begin{flalign}
	\label{Vlasov}
		\begin{split}
			&\partial_{t}f(x, v, t)+v\cdot\partial_{x}f(x, v, t)-E(x,t)\cdot\partial_{v}f(x, v, t)=0\\
			&E(x, t)=\int_{0}^{1}K(x, y)\left[\rho(y)-\int_{-\infty}^{\infty}f(y, v, t)dv\right]dy.
		\end{split}
	\end{flalign}
	\indent
	Here, the kernel $K$ is defined as\footnote{$K$ serves as a gradient of Green's function for the Laplacian on a torus.}
	\begin{flalign*}
		K(x, y)=\begin{cases}
			y, & \text{for } 0\leq x< y\\
			y-1, & \text{for } y< x\leq 1
		\end{cases}
	\end{flalign*}
	and we shall assume that $$\int_{0}^{1}\rho(y)dy=1=\int_{\mathbb{T}\times\mathbb{R}} f(x, v, 0)dxdv=\int_{0}^{1}\rho_e(y, t)dy$$
	with $\rho_e$ being the time-dependent electron density function. This equation describes the distribution of electrons $f$ in the phase space $\mathbb{T}\times\mathbb{R}$ whose motion is driven by the electric field $E$ induced by electrons themselves and surrounding positive ions in the background \cite{Majda}, which is applicable to study the motion of the plasma in heavy ions so that the positions of ions remain fixed. Specifically, the electric field satisfies the one-dimensional Poisson equation, which implies the kernel representation as above. We note that since we are working on the periodic domain, the total charge of the electrons and ions must balance each other. \\
	\indent
	Our main results establish that at least linear in time filamentation for $f$ occurs near a
	family of stable steady states for (\ref{Vlasov}). Filamentation for the distribution function, which roughly refers to the creation of long and thin structures, is a generic phenomenon clearly observed in various numerical simulations of the Vlasov–Poisson equation \cite{Fijalkow}, \cite{Sandburg}, \cite{Colombi}. More specifically, numerical computations show that as the solution evolves in time, strong filamentation occurs at least in some region of the phase space, homogenizing the electron density function to some degree.\\
	\indent
	Following the work \cite{Drivas}, we quantify filamentation by the growth of the norm $\Vert \nabla f \Vert_{L^1(\mathbb{T}\times\mathbb{R})}$. This is appropriate, since upon conservation of all $L^p$ norms for $f$, growth of the gradient in $L^1$ implies formation of long and thin structures in an $O(1)$-region of the phase space.\\
	\indent
	We note that linear in time filamentation is a simple consequence when Landau damping occurs for the solution, which corresponds to the extreme situation where mixing in the phase space occurs and makes the density uniform over the spatial domain. However, we emphasize that for Landau damping to occur, stringent conditions both on the equilibrium state and its perturbations, such as Gevrey regularity, are required \cite{Gagnebin}, \cite{Ionescu}, and \cite{Mouhot}. Counterexamples exist for the Sobolev perturbation \cite{Bedrossian}, and the absence of Landau damping for more general steady states and perturbations is clear from numerical simulations \cite{Sandburg}.\\
	\indent
	Filamentation is obtained as a consequence of the “twisting phenomenon,” which can be briefly explained as follows. Consider the shear flow on the strip $\mathbb{T}\times [0,1]$ described below\footnote{This type of flow occurs in completely integrable Hamiltonian systems.}:
	\begin{flalign*}
		\frac{d\theta}{dt}=J\quad \text{and} \quad \frac{dJ}{dt}=0.
	\end{flalign*}
	The flow map of this system is
	$$\Phi_t(\theta, J)=(\theta+Jt, J).$$
	\indent
	Thus, we note that if we lift the dynamics to the universal cover $\mathbb{R}\times[0, 1]$, we can see that ``filamentation" occurs, that is, for an arbitrary bounded set $A$, $\operatorname{diam}(\Phi_t(A))$ grows linearly in time. As a consequence, for generic $f:\mathbb{T}\times[0, 1]\to \mathbb{R}$, the $L^1$ norm of $\nabla \left(f\circ\Phi_t^{-1}\right)$ grows linearly in time.\footnote{It means that the quantity conserved along the flow experiences filamentation. Examples can be vorticity in thes 2D Euler equation, and so on.} This phenomenon was first named as twisting in \cite{Drivas}.\\
	\indent
	It is reasonable to expect that this phenomena is stable under small perturbations in the flow map, that is, small perturbations in the Lagrangian sense. A remarkable fact shown in \cite{Drivas} is that this phenomenon is stable even in small perturbation in the Eulerian sense, that is, it is stable under small non-autonomous perturbations in the velocity vector field (or stream function). The smallness is measured only in an integral sense. Moreover, this is a phenomenon that even occurs locally. That is, if the twisting occurs on an invariant subset of the whole phase space, it is robust under small non-autonomous perturbations in this invariant subset. Such perturbations do not have to leave this subset invariant. This result is stated in section \ref{Thm} in detail. \\
	\indent
	The result introduced above was shown to be applicable in many situations, such as Arnold diffusion, gradient growth of vorticity in SQG equations, and so on \cite{Drivas}. In this paper, we apply this result to one-dimensional Vlasov-Poisson system. The twisting occurs in two-dimensional phase space $\mathbb{T}\times\mathbb{R}$. It seems at first that the result in \cite{Drivas} is not directly applicable since the phase space is noncompact. However, due to the locality of this result, it is still applicable if we focus on the compact invariant subset.\\
	\indent
	Now, we move on to the  details. What we have proved is that the twisting occurs when the initial data is sufficiently near the stationary state of the following form.\footnote{It seems at first that this assumption is a big restriction. However, for all stationary solutions, since the Hamiltonian $\varphi\left(\frac{1}{2}v^2+U^*(x)\right)$ is conserved along the characteristics, the distribution function is constant along each connected component of energy levels. Thus, we see that this is a relatively mild assumption.}
	\begin{flalign}
		\label{Stationary}
		f^*(x, v)=\varphi\left(\frac{1}{2}v^2+U^*(x)\right)
	\end{flalign}
	where 
	\begin{flalign}
		U^*(x)=\int_{0}^{x}\int_{0}^{1}K(z, y)\left[\rho(y)-\int_{-\infty}^{\infty}f^{*}(y, v)dv\right]dydz.
	\end{flalign}
	Here, we assume that 
	\begin{enumerate}
		\item $\varphi\in C^1(\mathbb{R})$
		\item $\varphi$ is supported on $(-\infty, E_{max}]$
		\item $\varphi'(E)<0$ for $E<E_{max}$
	\end{enumerate}
	We state the main theorem below.
	\begin{theorem}
		\label{Main}
		Suppose that $f^*(x, v)=\varphi\left(\frac{1}{2}v^2+U^*(x)\right)$ where $
		\varphi$ satisfies the above assumption. Then, for all $C_1>0$, there exists $\delta>0$ depending on $C_1$ such that if $f(x, v, 0)\leq C_1$ and $\int_{\mathbb{T}}\int_{\mathbb{R}} (1+v^2)\vert f(x, v, 0)-f^*(x, v) \vert dvdx <\delta$
		holds, the twisting occurs in the following sense.\\
		For the characteristic flow $\Phi$ defined as in Section \ref{P}, the $L^1$ norm of its gradient grows linearly in time:
		$$\exists A>0 \quad \text{s.t.} \quad \Vert \nabla\Phi_t \Vert_{L^1(\mathbb{T}\times [-v_0, v_0])}\geq At.$$
	\end{theorem}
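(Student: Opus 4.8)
The plan is to view the characteristic flow $\Phi$ of \eqref{Vlasov} as a small non-autonomous perturbation — small in an integral norm, uniformly in time — of the steady characteristic flow $\Phi^{*}$ of $\dot x=v,\ \dot v=-E^{*}(x)$, to check that $\Phi^{*}$ twists on a bounded invariant annulus, and then to apply the (local) twisting theorem of Section~\ref{Thm}; the analytic input that keeps the perturbation small for all time is orbital stability of $f^{*}$. \emph{Step 1 (the steady flow twists on a bounded annulus).} The flow $\Phi^{*}$ is that of the one-degree-of-freedom Hamiltonian $H^{*}(x,v)=\tfrac12 v^{2}+U^{*}(x)$, so on any region foliated by periodic orbits $\{H^{*}=E\}$, $E$ in an interval $(E_{1},E_{2})$, it becomes in action–angle variables the shear $\dot\theta=\omega(I),\ \dot I=0$ with $\omega=2\pi/T$, $T(E)$ the period. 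I would take $(E_{1},E_{2})$ a compact interval on which $T$ is finite and non-constant, so that $\Phi^{*}$ restricts to a genuine twist on the corresponding annulus $\mathcal A$ — exactly the model situation of the twisting theorem. Such an interval exists: if $U^{*}$ is non-constant then $T(E)\to\infty$ as $E$ approaches the separatrix value $\max_{x}U^{*}(x)$ (logarithmically at a nondegenerate critical point, a power blow-up at a degenerate one), so $T$ is not locally constant just below (or above) that value while still being finite there; and if $U^{*}\equiv\mathrm{const}$ then $E^{*}\equiv 0$ and $\Phi^{*}_{t}(x,v)=(x+vt,v)$ is an exact shear, for which any strip works. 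Since $\{E_{1}\le H^{*}\le E_{2}\}$ is bounded in $v$, we have $\mathcal A\subset\mathbb T\times[-v_{0},v_{0}]$ for some $v_{0}>0$, and as $\|\nabla\Phi_{t}\|_{L^{1}(\mathbb T\times[-v_{0},v_{0}])}\ge\|\nabla\Phi_{t}\|_{L^{1}(\mathcal A)}$ it suffices to bound the latter.

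\emph{Step 2 (the true flow is a small perturbation on $\mathcal A$).} Write $\Phi$ as the flow of $\dot x=v,\ \dot v=-E^{*}(x)-R(x,t)$, where $R(x,t)=E(x,t)-E^{*}(x)=\int_{0}^{1}K(x,y)[\rho_{e}^{*}(y)-\rho_{e}(y,t)]\,dy$. Since $K$ is bounded and $\partial_{x}R=\rho_{e}^{*}-\rho_{e}(\cdot,t)$, for every $t$
$$\|R(\cdot,t)\|_{W^{1,1}(\mathbb T)}\ \lesssim\ \|\rho_{e}(\cdot,t)-\rho_{e}^{*}\|_{L^{1}(\mathbb T)}\ \le\ D(t):=\int_{\mathbb T}\int_{\mathbb R}(1+v^{2})\,|f(x,v,t)-f^{*}(x,v)|\,dv\,dx .$$
Applying the local twisting theorem of Section~\ref{Thm} to the twisting autonomous flow $\Phi^{*}$ on the invariant set $\mathcal A$ and to the non-autonomous perturbation $(0,-R(x,t))$ of its generating field, we get a threshold $\varepsilon_{0}=\varepsilon_{0}(\mathcal A,\omega)>0$ such that if the relevant integral norm of $R(\cdot,t)$ on $\mathcal A$ stays below $\varepsilon_{0}$ for all $t\ge0$, then $\|\nabla\Phi_{t}\|_{L^{1}(\mathcal A)}\ge At$ for some $A>0$. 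It remains to make $\sup_{t\ge0}D(t)$ arbitrarily small by shrinking $\delta$.

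\emph{Step 3 (orbital stability of $f^{*}$).} This is the classical energy–Casimir argument. Let $\beta$ be the convex function with $\beta'=-\varphi^{-1}$ on the range of $f^{*}$ — convex because $\varphi'<0$ — normalized so $\beta(0)=0$, and set $\mathcal H[f]=\mathcal E[f]+\int_{\mathbb T}\int_{\mathbb R}\beta(f)\,dv\,dx$ with $\mathcal E$ the conserved kinetic-plus-field energy of \eqref{Vlasov}. Then $\mathcal H$ is conserved and strictly convex ($\beta''>0$), and the relation $f^{*}=\varphi(H^{*})$ makes $f^{*}$ its unique minimizer, with a coercivity bound $\mathcal H[f]-\mathcal H[f^{*}]\gtrsim\|E_{f}-E^{*}\|_{L^{2}(\mathbb T)}^{2}$ (and, splitting $|v|\le R$ from $|v|>R$ and using the $1+v^{2}$ weight, a quantitative lower bound in terms of $D$). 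Hence for all $t\ge0$
$$\|E(\cdot,t)-E^{*}\|_{L^{2}(\mathbb T)}^{2}\ \lesssim\ \mathcal H[f(t)]-\mathcal H[f^{*}]\ =\ \mathcal H[f_{0}]-\mathcal H[f^{*}]\ \lesssim_{C_{1},\varphi}\ \delta ,$$
the last bound since the field part of $\mathcal H[f_{0}]-\mathcal H[f^{*}]$ is $O(\delta^{2})$ (because $\|E_{f_{0}}-E^{*}\|_{L^{\infty}}\lesssim D(0)<\delta$) and the Casimir remainder $\int\int[\beta(f_{0})-\beta(f^{*})-\beta'(f^{*})(f_{0}-f^{*})]$ is controlled by a first-order Taylor estimate, using $f_{0}\le C_{1}$ and boundedness of $\beta'$ on $\{f^{*}>0\}$, and using $\int_{\{f^{*}=0\}}f_{0}=\int_{\{f^{*}=0\}}|f_{0}-f^{*}|<\delta$ together with $\|f_{0}\|_{L^{\infty}}\le C_{1}$ to absorb the mild degeneracy of $\beta$ near the free boundary $\{f^{*}=0\}$. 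Since in addition $\|f(\cdot,t)\|_{L^{\infty}}=\|f_{0}\|_{L^{\infty}}\le C_{1}$ is conserved, the coercivity upgrades this to $\sup_{t\ge0}D(t)\le\eta(\delta)$ with $\eta(\delta)\to0$ as $\delta\to0$. Taking $\delta$ small enough that $\eta(\delta)<\varepsilon_{0}$ finishes the proof.

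\emph{Main obstacle.} I expect the fight to be in Step~3: turning energy–Casimir into a quantitative, uniform-in-time closeness on the non-compact phase space — controlling the degeneracy of $\beta$ near $\{f^{*}=0\}$, handling large velocities via the $1+v^{2}$ weight, and extracting an honest modulus of continuity $\eta$ from the coercivity of $\mathcal H$ — which is precisely where the paper's own stability results for $f^{*}$ come in. A secondary point is to verify, in Step~2, that $\mathcal A$ and its action–angle chart meet the precise nondegeneracy hypotheses of the theorem of Section~\ref{Thm}, and to pin down the norm in which $R$ must be small — and whether that smallness must hold merely uniformly in $t$ or also in a time-integrated sense, in which case one would instead iterate the local theorem over successive time windows.
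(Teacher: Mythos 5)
Your overall architecture — verify the steady flow $\Phi^*$ is a twist on a bounded invariant annulus, view $\Phi$ as a non-autonomous perturbation whose size is controlled by $\|E(\cdot,t)-E^*\|_{W^{1,1}}$ and hence by a Lebesgue norm of $f(t)-f^*$, and keep that small for all time via the energy–Casimir functional — is exactly the paper's route. Your Step~1 is more explicit than the paper about why the period function is non-constant (the paper leaves this implicit), and your Step~2 estimate $\|R\|_{W^{1,1}}\lesssim\|\rho_e-\rho_e^*\|_{L^1}$ is the same bound as the paper's Lemma~3.1 after integrating out the trivial $v$-dependence of $U-U^*$. So the strategy is right.

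The gap is in Step~3, and it is substantive. You aim to show $\sup_{t\ge0}D(t)\le\eta(\delta)\to0$, where $D(t)=\int(1+v^2)|f(t)-f^*|$. This is not achievable, and it is also more than you need. The weighted tail $\int v^2|f(t)-f^*|$ cannot be made small from $D(0)<\delta$: energy conservation controls only $\int v^2 f(t)$ and $\int v^2 f^*$ separately (each $O(1)$), not the second moment of $|f(t)-f^*|$, and a small amount of mass migrating to velocity $\sim N$ carries $O(1)$ second moment while contributing $o(1)$ to $\|f-f^*\|_{L^2}$, so coercivity of $H_C$ cannot rule it out. What you actually need (and all Step~2 uses) is $\|\rho_e(t)-\rho_e^*\|_{L^1(\mathbb T)}\le\|f(t)-f^*\|_{L^1(\mathbb T\times\mathbb R)}$, \emph{without} the weight. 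That quantity \emph{is} controllable, but not directly from the $L^2$ coercivity: one must interpolate via the Chebyshev split $\|f-f^*\|_{L^1}\le(2u_0)^{1/2}\|f-f^*\|_{L^2}+\tfrac{2}{u_0^2}(H(f)+H(f^*))$, where the second, tail term is of size $O(u_0^{-2})$ and \emph{does not vanish} as $\delta\to0$. The resolution (this is the paper's Lemma~3.1 plus the order of quantifiers in its proof of Theorem~\ref{Main}) is that the twisting theorem of Section~\ref{Thm} only asks for the perturbation to lie below a \emph{fixed} threshold $\varepsilon_0$ determined by $\psi^*$; so one first chooses $u_0$ large enough that $\tfrac{2}{u_0^2}(2H(f^*)+O(1))<\varepsilon_0/2$, and only then chooses $\delta$ small enough to control the $u_0^{1/2}\|f-f^*\|_{L^2}$ term via energy–Casimir. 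If you insist on a modulus $\eta(\delta)\to0$ you can get it by optimizing $u_0$ in $\delta$, giving $\|f-f^*\|_{L^1}\lesssim\delta^{2/5}$, but the weighted $D(t)$ stays $O(1)$ no matter what, so the last inequality in your Step~2 chain should stop at the unweighted $L^1$ norm.
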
 
	\indent
	By adding (minor) additional conditions to $f$ near $f^*$, we actually have more. In particular, we have the following $L^1$ gradient growth for $f$:
	\begin{theorem}
		\label{Corr}
		Assume that the conditions for $f^*$ in Theorem 1.1 are fulfilled. In addition, suppose that there exist two distinct level sets of $f(\cdot, \cdot, 0)$ with different values (say, 0 and 1) contained in $M=\operatorname{supp}(f^*)$ such that they enclose areas sufficiently close to the area of $M$. Then, for all $C_1>0$, there exists $\epsilon>0$ depending on $C_1$ such that if $$\vert f(x, v, 0)\vert\leq C_1$$ and $$\int_{\mathbb{T}}\int_{\mathbb{R}} (1+v^2)\vert f(x, v, 0)-f^*(x, v) \vert dvdx<\epsilon$$
		then the $L^1$ norm of $\nabla f$ grows linearly in time:
		$$\exists B>0 \quad \text{s.t.} \quad \Vert \nabla f(\cdot, \cdot, t) \Vert_{L^1(\mathbb{T}\times [-v_0, v_0])}\geq Bt.$$
		Here, $v_0>0$ is chosen such that $M\subset \mathbb{T}\times[-v_0, v_0]$. 
	\end{theorem}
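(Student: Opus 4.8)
\bigskip
\noindent\textbf{Proof plan for Theorem~\ref{Corr}.}
The plan is to deduce the gradient growth for $f$ from the twisting of the characteristic flow $\Phi$ established in Theorem~\ref{Main}, turning the elongation of material level curves into a lower bound on $\Vert\nabla f\Vert_{L^1}$ through the coarea formula. I would start from two structural facts: $f$ is transported along characteristics, $f(\cdot,\cdot,t)=f(\cdot,\cdot,0)\circ\Phi_t^{-1}$, and $\Phi_t$ is area preserving because the characteristic field $(v,-E(x,t))$ is divergence free (Liouville). Writing $L_s=\{(x,v):f(x,v,0)=s\}$, the coarea formula for $f(\cdot,\cdot,t)$ gives
\[
	\Vert\nabla f(\cdot,\cdot,t)\Vert_{L^1(\mathbb{T}\times[-v_0,v_0])}\ \geq\ \int_0^1 \mathcal{H}^1\big(\Phi_t(L_s)\cap(\mathbb{T}\times[-v_0,v_0])\big)\,ds .
\]
Conservation of all $L^p$ norms of $f$ and of the energy (hence an $O(1)$ bound on $\iint v^2 f$) keeps, for fixed $s\in(0,1)$, the bulk of the superlevel set $\{f(\cdot,\cdot,t)>s\}$ --- and so the bulk of $\Phi_t(L_s)$ --- inside a fixed strip $\mathbb{T}\times[-v_0,v_0]$; so it suffices to show $\mathcal{H}^1(\Phi_t(L_s))\geq c\,t$ for $s$ in a subset of $(0,1)$ of positive Lebesgue measure.

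The two-level-set hypothesis fixes the geometry. The curves $\gamma_0=\{f(\cdot,\cdot,0)=0\}$ and $\gamma_1=\{f(\cdot,\cdot,0)=1\}$ bound nested regions $R_1\subset R_0\subset M$ with $|R_0|,|R_1|$ close to $|M|$, so the transition annulus $\mathcal{A}=R_0\setminus\overline{R_1}$ has small positive area, and across it $f(\cdot,\cdot,0)$ runs from $0$ on $\gamma_0$ to $1$ on $\gamma_1$; hence for each $s\in(0,1)$ the level set $L_s$ contains an essential loop $\ell_s\subset\mathcal{A}$ separating $\gamma_0$ from $\gamma_1$. Two features of this configuration drive the mechanism. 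First, since $\gamma_0,\gamma_1$ enclose nearly all of $M$, the annulus $\mathcal{A}$ lies in a neighborhood of $\partial M$, where the frequency $\omega$ of the underlying Hamiltonian (in action-angle coordinates) is non-constant --- for a rotational boundary because $T'(E)\neq0$, near a separatrix because $T(E)\to\infty$ --- precisely the non-isochronicity that already drives Theorem~\ref{Main}. Second, $f(\cdot,\cdot,0)$ cannot be stationary: a steady state close to $f^*$ has a strictly decreasing profile (by the earlier existence/rigidity results), so its zero set is the fat region $\{H\geq E_{max}\}$, not a loop enclosing area close to $|M|$; thus $f(\cdot,\cdot,0)$ is not a function of the flow's self-consistent Hamiltonian, and therefore its level sets are transverse, on a set of positive length, to the invariant foliation of the (nearly autonomous, since $f^*$ is stable) characteristic flow. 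Shrinking the parameter range if needed, this transversality holds for $\ell_s$ for all $s$ in a set $S\subset(0,1)$ of positive measure.

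For such $s$ I would close the estimate by the twisting mechanism, in the form of the local robustness theorem of \cite{Drivas} applied on a neighborhood of $\mathcal{A}$: there the characteristic flow is a non-autonomous perturbation --- small, as in \cite{Drivas}, only in an integral sense, and small for all $t$ because $f^*$ is stable --- of the shear $(\theta,I)\mapsto(\theta+\omega(I)t,I)$. Lifting a loop $\ell_s$ transverse to the foliation to the universal cover of $\mathbb{T}\times\mathbb{R}$, its two endpoints carry one action while an interior point carries a strictly different one, so under $\Phi_t$ they are advected in $x$ at rates differing by $\operatorname{osc}_{\ell_s}(\omega)>0$; the lifted image then has horizontal extent $\gtrsim t\cdot\operatorname{osc}_{\ell_s}(\omega)$, whence $\mathcal{H}^1(\Phi_t(\ell_s))\geq c_s t$ with $c_s>0$, and the robustness theorem ensures this survives the perturbation. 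Since $\mathcal{A}\subset M$ and the true flow nearly preserves a bounded invariant foliation, $\Phi_t(\mathcal{A})$ stays in $\mathbb{T}\times[-v_0,v_0]$ after enlarging $v_0$ if necessary, so $\mathcal{H}^1(\Phi_t(L_s)\cap(\mathbb{T}\times[-v_0,v_0]))\geq c_s t$ for $s\in S$; integrating over $S$ gives $\Vert\nabla f(\cdot,\cdot,t)\Vert_{L^1(\mathbb{T}\times[-v_0,v_0])}\geq B\,t$.

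The main obstacle I anticipate lies in the passage from Theorem~\ref{Main} --- a bound on the averaged quantity $\Vert\nabla\Phi_t\Vert_{L^1}$ --- to the curve-by-curve estimate $\mathcal{H}^1(\Phi_t(\ell_s))\gtrsim t$ used here, uniformly over a positive-measure family of level values: this requires re-running the \cite{Drivas} robustness argument with the invariant subset taken to be a neighborhood of the thin annulus $\mathcal{A}$ and with the loops $\ell_s$ as admissible data, and in particular quantifying how transverse they remain to the invariant foliation of the true flow, whose potential stays close to --- but in general not equal to --- $U^*$. This is exactly where one must combine the two-level-set hypothesis with the earlier existence and rigidity results, both to describe the invariant structure on $M$ and to rule out that $f(\cdot,\cdot,0)$ lies on a family of nearby steady states; the remaining technical points (controlling the mass of $\Phi_t(L_s)$ that escapes $\mathbb{T}\times[-v_0,v_0]$, and handling $f(\cdot,\cdot,0)$ when it is merely $W^{1,1}$) are comparatively routine.
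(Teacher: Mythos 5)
Your plan takes a genuinely different route from the paper's, and unfortunately the step you yourself flag as the main obstacle is exactly the gap that the paper's argument is designed to avoid. You propose to lower-bound $\Vert\nabla f\Vert_{L^1}$ via the coarea formula, reducing matters to a \emph{curve-by-curve} length estimate $\mathcal{H}^1(\Phi_t(\ell_s))\gtrsim t$ uniformly over a positive-measure set of level values $s$. To close this you would have to re-run the robustness machinery of \cite{Drivas} for each loop $\ell_s$ separately, with uniform constants, and you would also need to justify that each $L_s$ actually contains an essential loop in the annulus $\mathcal{A}$ — neither of which follows from what you have written. Your heuristic (endpoints and interior points carrying different actions, hence being advected at different rates) is valid for the unperturbed shear but is precisely the statement whose stability under non-autonomous perturbations is nontrivial; Theorem~\ref{Twisting} gives you control of the averaged quantity $\Vert\nabla\Phi_t\Vert_{L^1}$, not of individual curve lengths. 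The auxiliary argument about $f(\cdot,\cdot,0)$ not being a function of the Hamiltonian (invoking ``rigidity results'' not actually proved in the paper) also does not deliver a quantitative transversality to the foliation of the \emph{true, non-autonomous} flow.

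The paper avoids the curve-by-curve problem entirely. It uses only the two hypothesized level curves $\gamma_1,\gamma_2$ and invokes Lemma~2.9 of \cite{Drivas}, which already produces, from the proof of Theorem~\ref{Twisting}, two sets $A(t),B(t)$ in the universal cover of fixed measure $\mu$ with $\operatorname{dist}(A(t),B(t))\gtrsim t$. The paper's version of your ``keeps the bulk inside a strip'' step is done quantitatively: a threshold $\rho>0$ is chosen with $\vert\{0<f^*<\rho\}\vert$ small, and the Energy--Casimir $L^2$-stability estimate controls the measure of $\{(x,v)\in M:\Phi_t^{-1}(x,v)\notin M\}$ by a Chebyshev-type argument on $S_2\setminus S_1$. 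With the escape measure small compared to $\mu$, the area hypothesis on $\gamma_1,\gamma_2$ forces both curves to meet both $\Phi_t^{-1}(A(t))$ and $\Phi_t^{-1}(B(t))$, so the lifted images $\widetilde\Phi_t(\gamma_i)$ have diameter $\gtrsim t$. Finally, instead of coarea, the paper lower-bounds $\Vert\nabla f\Vert_{L^1}$ by $\int_{\mathbb{T}}\int_{\mathbb{R}}\vert\partial_v f\vert\,dv\,dx$ and uses the fundamental theorem of calculus plus the intermediate value theorem: the count of alternating crossings of a vertical line $\{x\}\times\mathbb{R}$ with $\gamma_1$ and $\gamma_2$ (which must be large because the two curves carry different values of $f$ and both have been stretched linearly) gives the $\gtrsim t$ bound. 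That step is the one you would need to replace your open curve-by-curve claim with; without it or an equivalent mechanism your proposal does not close.
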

	This result seems to be the first infinite gradient growth result for smooth solutions of the Vlasov-Poisson equation, except for the case when Landau damping is proved.\footnote{However, in \cite{patch_1} there are related results for the electron sheet case, where the electron distribution is supported in a regular curve in the two-dimensional configuration space. Namely, the folding of the electron sheets is observed, and it may be somehow related to this situation due to the fact that both are related to the small-scale creations.}\\
	\indent
	These theorems, however, assume that a stable steady distribution of such forms exists, which does not seem to be available for non-constant background ion density in $\mathbb{T}$, to the best of our knowledge. Thus, later in this paper, we prove the existence of such steady states in some special cases. 
	\subsection{Literature}
	\subsubsection{Vlasov Poisson Equation}
	The Vlasov-Poisson equation has been studied extensively. For smooth solutions, the local well posedness in $\mathbb{R}^d$, and even global well-posedness for the cases $d=1, 2$ were proven in \cite{Horst}. The global well-posedness for $d=3$ for general initial data was proven in \cite{K.P.}. These results can also be found in \cite{Glassey}. By a slight modification of the proof, it is possible to improve this result to the case in $\mathbb{T}^d$ in the presence of background
	ion density $\rho$. However, it was also shown in \cite{Horst2} that there is a finite time blow-up for $d\geq4$.\\
	\indent
	There are also lots of results concerning the existence of stable steady states. The central result in this problem is the method based on constructing the conserved quantity which is continuous and positive definite so-called the Energy-Casimir method \cite{Casimir}. Many results used this method. For example, in \cite{Yan&Gerhard_2}, Yan Guo and Gerhard Rein showed the existence of rotationally symmetric steady states stable under perturbations with the same symmetry. Gerhard Rein improved this result by removing the symmetric assumptions on perturbations in \cite{G.R}. In \cite{G.W}, G.Wolansky constructed symmetric stable steady states for three-dimensional Vlasov-Poisson system in the stellar case. Also, in \cite{Y.G}, Yan Guo constructed so-called ``Camm" type stable steady states in the same situation. There are more results concerning the stellar case, such as \cite{Jack}, \cite{Yan&Gerhard}, and \cite{Yieh-Hei}. \\
	\indent
	In the case of charged particles, Peter Braasch, Gerhard Rein, and Jesenko Vukadinovich  constructed the stationary steady states in 3 dimensions using the Energy-Casimir method in \cite{Peter}. They constructed the steady states without the stationary ion approximations. In \cite{Maria}, Maria J. Caceres, Jose A. Carrillo, and Jean  Dolbeault constructed $L^p$ stable states in the case of electrons moving in an external electric field again, by using the Energy-Casimir method. \\
	\indent
	Finally, there are some previous results related to twisting phenomena. In \cite{patch_1}, Ryszard Stanislaw Dziurzynski proved the folding formation of electron patches. The setting in that paper considers the patch solution, which is not smooth, but corresponds to the gradient growth in our settings. Also, in \cite{patch_2}, Emeric Roulley proved the emergence of traveling periodic electron layers. At first glance, it seems to contradict our result since there is no growth but rather periodicity. However, our results give twisting for generic perturbations, not for all. Furthermore, \cite{patch_2} considers the patch case while we consider smooth solutions. 
	\subsubsection{Comparison With 2D Euler Equation}
	There are corresponding results for the 2D Euler equation on a torus. Namely, in \cite{Denisov}, Denisov has proven the growth of the gradient of vorticity in time. However, the strategy of the proof in this paper is essentially different. The proof of Denisov's result obtains the gradient growth 
	of the vorticity by obtaining the $L^\infty$ stability of the velocity fields, which relies on the stability of the $L^p$ norm of the vorticity. However, there are no corresponding results available in the Vlasov-Poisson equation. The characteristic velocity field is given as $(v, -\nabla U)$ and, its $L^\infty$ stability is not known. However, we were able to obtain the $L^1$ stability, which was enough for applying the twisting theorem in \cite{Drivas}.
	\subsection{Outline of the Paper}
	The rest of the paper is organized as follows. In Section 2, we list and explain some important tools for proving the main theorem. In Section 3, we obtain an estimate for the stream function in terms of the distribution function $f$. In Section 4, we prove the stability of $f$ near the equilibrium state $f^*$. We use these two results to prove Theorem \ref{Main} and Theorem \ref{Corr}, which are done in Section 5. Finally, we obtain the existence of a stationary distribution $f^*$ in some special cases, including uniform ion charge cases and ion distribution close to the "Delta" distribution.
	
	\section{Preliminaries}
	\subsection{Characteristics and Stream Functions}
	\label{P}
	It is well known that the distribution function is conserved along the characteristic flow: i.e., the following holds.
	\begin{flalign}
		f(x, v, t)=f(\Phi_t^{-1}(x, v), 0)
	\end{flalign}
	where $\Phi$ is the flow corresponding to the following differential equations
	\begin{flalign}
		\frac{dx}{dt}=v\quad \text{and}\quad \frac{dv}{dt}=-E(x, t)
	\end{flalign}
	satisfying $\Phi_0=\operatorname{id}$.
	Here, $E(x, t)$ is the same one as in (\ref{Vlasov}). 
	This characteristic equation is a (non-autonomous) Hamiltonian system with Hamiltonian $$\psi(x, v, t)=\frac{1}{2}v^2+U(x, t)=\frac{1}{2}v^2+\int_{0}^{x}E(y, t)dy.$$
	As a consequence, this flow is (Lebesgue) measure-preserving.
	\subsection{Conservation of Energy}
	Another main ingredient in this paper is the conservation of energy. The energy of the state $f$ is defined as follows:
	$$H(f)=\int_{\mathbb{T}}\int_{\mathbb{R}} \frac{v^2}{2}f(x, v, t) dvdx+\int_{\mathbb{T}} \frac{1}{2} E(x, t)^2 dx.$$
	For sufficiently smooth solutions, it is known that this quantity is conserved in time:
	$$H(f(\cdot, \cdot, t))=H(f(\cdot, \cdot, 0)).$$
	
	\subsection{Twisting Theorem in Hamiltonian Flows}
	\label{Thm}
	We now state the theorem of stability of twisting in \cite{Drivas} here. We state the result in more simpler form here. The full statement can be found in the original paper \cite{Drivas}: 
	\begin{theorem}
		\label{Twisting}
		Let $M$ be a two-dimensional annular domain (domain diffeomorphic to an annulus), and $A\subset M$ be a subdomain. Suppose that $\psi_*$ is a smooth time-independent stream function on M which has two periodic orbits in $A$ such that
		\begin{enumerate}
			\item They are noncontractible in $A$.
			\item They have different periods.
		\end{enumerate}
		Then, the following holds:
		There exists $\epsilon>0$ depending only on $\psi_{*}\vert_A$ such that for all smooth $\psi:M\times\mathbb{R}\to \mathbb{R}$ such that $u=\nabla^\perp\psi$ is uniformly bounded and $\Vert \psi_{*}-\psi \Vert_{W^{1,1}}<
		\epsilon$ for all $t\in\mathbb{R}$, we have the following:
		\begin{enumerate}
			\item For any lift $\widetilde{\Phi_t}$ of the flow $\Phi$ to the universal cover $\widetilde{M}$, the diameter $\tilde{\Phi_t}(M)\subset\widetilde{M}$ grows linearly in time.
			\item $\Vert \nabla\Phi_t \Vert_{L^1}$ grows linearly in time as $\vert t\vert\to\infty$. 
		\end{enumerate} 
	\end{theorem}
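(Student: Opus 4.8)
The plan is to extract from the hypotheses a fixed positive \emph{frequency gap}, to show that this gap forces linear stretching of radial arcs under the unperturbed flow, and then to prove that a perturbation that is only integrally small --- but uniformly in time --- cannot destroy this, because area preservation converts an $L^1$-small velocity perturbation into a pointwise-small error along \emph{most} trajectories.

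\textbf{Step 1 (normal form and the frequency gap).} Let $\gamma_1,\gamma_2\subset A$ be the two periodic orbits of $\psi_*$, with periods $T_1\ne T_2$. Each $\gamma_i$ is a genuine periodic orbit, so $\nabla\psi_*\ne 0$ on it and $\psi_*$ is regular in a tubular neighborhood $U_i\subset A$ of $\gamma_i$; the nearby level sets of $\psi_*$ are again noncontractible closed curves forming an invariant cylinder, and on $U_i$ I would set up action--angle coordinates $(\theta,I)\in(\mathbb{R}/\mathbb{Z})\times(-r_i,r_i)$ in which $\gamma_i=\{I=0\}$ and the $\psi_*$--flow reads $\dot\theta=\Omega_i(I)$, $\dot I=0$, with $\Omega_i$ smooth and $\Omega_i(0)=1/T_i=:\omega_i$ (normalizing so that one loop around the annulus increases $\theta$ by $1$). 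Since $T_1\ne T_2$, the number $\delta_0:=|\omega_1-\omega_2|>0$ depends only on $\psi_*|_A$; after shrinking $U_1,U_2$ I may assume they are disjoint and that $|\Omega_i-\omega_i|<\delta_0/100$ on $U_i$, so any trajectory staying in $U_i$ on $[0,t]$ has covering drift $\omega_i t+O(\delta_0 t/100)$.

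\textbf{Step 2 (stretching for the unperturbed flow).} I would fix a smooth arc $c:[0,1]\to A$ running from a point of $\gamma_1$ to a point of $\gamma_2$. Under the $\psi_*$--flow its endpoints stay on $\gamma_1,\gamma_2$ and wind at rates $\omega_1,\omega_2$, so the two ends of $\widetilde{\Phi^{*}_t}(c)$ separate by at least $\delta_0 t$ in the universal cover and any connected curve joining them has length $\ge\delta_0 t$. Sliding $c$ through a positive-measure family $\{c_\sigma\}$ of such arcs and combining $\mathrm{length}(\Phi^*_t\circ c_\sigma)\lesssim\int_0^1|\nabla\Phi^*_t(c_\sigma(s))|\,|\dot c_\sigma(s)|\,ds$ with Fubini over the foliation $\{c_\sigma\}$ gives $\|\nabla\Phi^*_t\|_{L^1(A)}\gtrsim\delta_0 t$ and, via the lift, $\operatorname{diam}\widetilde{\Phi^{*}_t}(M)\gtrsim\delta_0 t$. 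This is exactly the mechanism I would then stabilize.

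\textbf{Step 3 (stability).} Write $\psi=\psi_*+\eta_t$ with $\|\eta_t\|_{W^{1,1}(M)}<\epsilon$ for all $t$; then $\Phi$ is still area preserving (Hamiltonian for $\psi$), its angular velocity in the chart on $U_i$ is $\Omega_i(I)+R(x,t)$ with $|R|\le C|\nabla\eta_t|$, and $\dot I=O(|\nabla\eta_t|)$ as well. Letting $\tau(y)$ be the first exit time of the trajectory through $y$ from $U_i$, the crucial bound, from Fubini and area preservation, is
\[
\int_{U_i}\left|\int_{0}^{t\wedge\tau(y)}R(\Phi_s(y),s)\,ds\right|dy\ \le\ \int_0^t\!\!\int_{\Phi_s(U_i)}|R(\cdot,s)|\ \le\ C\!\int_0^t\|\nabla\eta_s\|_{L^1(M)}\,ds\ \le\ C\epsilon t ,
\]
and the same with $R$ replaced by $\dot I$. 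By Chebyshev I would then discard from $U_i$ a set of measure $\lesssim\epsilon/\delta_0$ (threshold $\delta_0 t/20$ for the $R$--term) and a further set on which the trajectory exits $U_i$ before time $t$ (threshold $\sim r_i$ for the $\dot I$--term), using the uniform bound on $u=\nabla^\perp\psi$ to control the speed in the stopping-time analysis; every remaining $y$ stays in $U_i$ on $[0,t]$ with covering drift $\omega_i t+O(\delta_0 t/10)$. Taking $\epsilon$ small, depending only on $\psi_*|_A$, so these exceptional sets have measure $<\tfrac12|U_i|$ recovers a positive-measure family of arcs as in Step 2 whose images stretch linearly, which yields both conclusions; the case $t\to-\infty$ follows by time reversal together with the identity $\|\nabla\Phi_{-t}\|_{L^1}=\|\nabla\Phi_{t}\|_{L^1}$ valid for area-preserving maps.

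\textbf{The main obstacle.} The delicate point is Step 3: the hypothesis is only an \emph{integral} bound on the perturbation, uniform in time, which yields no pointwise control of individual trajectories, so an adversarial $\eta_t$ can drive a small-measure set of initial data out of the $U_i$, exactly where the action--angle description breaks down. The real work is to couple the stopping-time exclusion with the ergodic-averaging estimate and to show the discarded set of initial data remains of small measure \emph{with a bound independent of $t$} as $t\to\infty$; this is the crux of \cite{Drivas} and requires, beyond the averaging estimate above, careful stopping-time bookkeeping that exploits the uniform bound on $u$ together with the annular topology.
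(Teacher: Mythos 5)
This statement is not proved in the paper: it is quoted (in simplified form) directly from \cite{Drivas}, where it appears as the main twisting-stability theorem; the paper treats it as a black box, so there is no ``paper's own proof'' to compare against. With that caveat, here is an assessment of your reconstruction.

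Your Steps 1 and 2 correctly capture the source of the mechanism (action--angle normal form near each $\gamma_i$, a fixed frequency gap $\delta_0=|\omega_1-\omega_2|>0$, and linear separation of the lifted endpoints of a transversal arc), and the Fubini/area-preservation estimate in Step 3 is the right way to convert an $L^1$-in-space, uniform-in-time velocity bound into a bound on time-averaged errors along trajectories. However, your confinement scheme has a genuine gap, which you partly acknowledge but do not close, and which in fact cannot be closed in the form stated. For the angular error $R$ the Chebyshev threshold scales like $\delta_0 t$, so the exceptional measure $\lesssim \epsilon/\delta_0$ is $t$-independent --- good. But for $\dot I$ the natural threshold is the fixed width $r_i$ of the tube $U_i$, and the same Chebyshev computation yields an exceptional set of measure $\lesssim \epsilon t/r_i$, which grows linearly in $t$. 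Thus for any fixed $\epsilon>0$ the set of initial data you can guarantee remains confined to $U_i$ eventually has measure zero; the ``positive-measure family of arcs'' you need in Step 2 disappears. Letting $\epsilon\to 0$ as $t\to\infty$ would violate the statement (which fixes $\epsilon$ depending only on $\psi_*|_A$). The argument in \cite{Drivas} avoids this by not confining trajectories at all: it works with a global topological winding quantity, estimated in an averaged, area-preserving sense over the entire invariant annulus, producing (for every $t$) two sets $A(t),B(t)$ of measure $\ge\mu$ with lifted separation $\ge Ct$ --- these are exactly the sets invoked later in the proof of Theorem \ref{Corr}. Your sketch substitutes a local-confinement argument that cannot deliver a $t$-uniform measure bound, so the key step remains open. (A secondary issue: your time-reversal remark uses $\Phi_{-t}=\Phi_t^{-1}$, which fails for non-autonomous $\psi$; the argument for $t\to-\infty$ has to be run directly.)
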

	\subsection{Strategy of the Proof}
	We prove the main theorem in the following strategy. First, we obtain an estimate of the difference of stream functions in terms of their distribution functions. Then, we prove the $L^2$ stability of the stationary distribution function $f^{*}$ by the Energy-Casimir method. We finally combine the two results and use Theorem \ref{Twisting} to conclude. 
	\section{Estimates for Stream Function in Terms of Distribution $f$}
	In order to apply the theorem, we need to estimate the following quantity:
	$$\Vert \psi-\psi^* \Vert_{W^{1, 1}\left(\mathbb{T}\times[-v_0, v_0]\right)}$$
	where $\psi$ and $\psi^*$ are stream functions corresponding to $f$ and $f^*$, respectively. We have the following Lemma.
	\begin{lemma}
		\label{ineq_1}
		Let $f^*$ and $f$ be a solution to (\ref{Vlasov}). Let $f^*$ be a stationary solution satisfying (\ref{Stationary})\footnote{Of course, we assume that the assumptions on $\varphi$ stated previously hold.}. Then, for all $u_{0}>0$, the following inequality holds.
		\begin{flalign*}
			&\Vert \psi-\psi^* \Vert_{W^{1,1}(\mathbb{T}\times[-v_0, v_0])}\leq 4v_0(2u_0)^{\frac{1}{2}} \Vert f-f^* \Vert_{L^2(\mathbb{T}\times\mathbb{R})} +\frac{8v_0}{u_{0}^2}\left( H(f)+H(f^*) \right).
		\end{flalign*}
	\end{lemma}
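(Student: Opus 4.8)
The plan is to split the $W^{1,1}$ norm into the $L^1$ norm of $\psi - \psi^*$ and the $L^1$ norm of $\nabla(\psi - \psi^*)$, and to estimate each in terms of the charge deficit $\int_0^1 (\rho_e^* - \rho_e)$-type quantities, which in turn are controlled by $\Vert f - f^* \Vert_{L^2}$ on a bounded velocity window plus a tail term handled by the energy. Write $g = f - f^*$. Since $\psi - \psi^* = U - U^*$ depends only on $x$ (the $\tfrac12 v^2$ parts cancel), and $\partial_x(\psi - \psi^*) = E - E^*$, while $\partial_v(\psi - \psi^*) = 0$, the $W^{1,1}$ norm over $\mathbb{T} \times [-v_0, v_0]$ reduces to $2v_0\big(\Vert U - U^*\Vert_{L^1(\mathbb{T})} + \Vert E - E^* \Vert_{L^1(\mathbb{T})}\big)$. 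Because $U - U^* = \int_0^x (E - E^*)$, it suffices to bound $\Vert E - E^* \Vert_{L^1(\mathbb{T})}$, and then the prefactor $2v_0$ becomes $4v_0$ (one factor from each of $U$ and $E$, with $\Vert U-U^*\Vert_{L^1}\le \Vert E-E^*\Vert_{L^1}$ since the domain has length one).

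Next I would estimate $\Vert E - E^* \Vert_{L^1(\mathbb{T})}$. From the kernel representation, $E(x) - E^*(x) = \int_0^1 K(x,y)\,\big(\rho_e^*(y) - \rho_e(y)\big)\,dy$ where $\rho_e(y) = \int_{\mathbb{R}} f(y,v)\,dv$ and similarly for $\rho_e^*$. Since $|K(x,y)| \le 1$, we get the pointwise bound $|E(x) - E^*(x)| \le \int_0^1 |\rho_e(y) - \rho_e^*(y)|\,dy = \Vert \rho_e - \rho_e^* \Vert_{L^1(\mathbb{T})}$, hence $\Vert E - E^*\Vert_{L^1(\mathbb{T})} \le \Vert \rho_e - \rho_e^* \Vert_{L^1(\mathbb{T})}$. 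So everything comes down to controlling the $L^1$ deficit of the spatial densities by the right-hand side of the lemma.

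The key step is the split of the velocity integral at $|v| = u_0$:
\begin{flalign*}
\Vert \rho_e - \rho_e^* \Vert_{L^1(\mathbb{T})} \leq \int_{\mathbb{T}}\int_{|v|\le u_0} |g(x,v)|\,dv\,dx + \int_{\mathbb{T}}\int_{|v|> u_0} |g(x,v)|\,dv\,dx.
\end{flalign*}
For the first (low-velocity) term, Cauchy–Schwarz in $v$ over the interval of length $2u_0$ gives $\int_{|v|\le u_0}|g|\,dv \le (2u_0)^{1/2}\big(\int_{\mathbb{R}}|g|^2\,dv\big)^{1/2}$, and then Cauchy–Schwarz in $x$ over $\mathbb{T}$ (length one) yields $\int_{\mathbb{T}}\int_{|v|\le u_0}|g|\,dv\,dx \le (2u_0)^{1/2}\Vert g \Vert_{L^2(\mathbb{T}\times\mathbb{R})}$. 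For the second (high-velocity) term, I would use the weight trick $|g(x,v)| \le \frac{v^2}{u_0^2}|g(x,v)|$ on $\{|v| > u_0\}$, so $\int_{\mathbb{T}}\int_{|v|>u_0}|g|\,dv\,dx \le \frac{1}{u_0^2}\int_{\mathbb{T}}\int_{\mathbb{R}} v^2\,|f - f^*|\,dv\,dx \le \frac{2}{u_0^2}\big(\int \tfrac{v^2}{2} f + \int \tfrac{v^2}{2} f^*\big) \le \frac{4}{u_0^2}\big(H(f) + H(f^*)\big)$, using nonnegativity of $f$ and $f^*$ and the fact that the field-energy terms in $H$ are nonnegative, so the kinetic energy is bounded by $H$. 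Multiplying the resulting bound on $\Vert \rho_e - \rho_e^* \Vert_{L^1}$ by the geometric prefactor $4v_0$ from the first paragraph gives exactly the claimed inequality.

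The main obstacle is bookkeeping the constants so that they match the stated $4v_0(2u_0)^{1/2}$ and $8v_0 u_0^{-2}$: one must be careful that the reduction from $\Vert\psi-\psi^*\Vert_{W^{1,1}}$ contributes a factor $4v_0$ (not $2v_0$), that the tail estimate $v^2|g| \le v^2 f + v^2 f^*$ combined with $H \ge \int \tfrac{v^2}{2}f$ produces a factor $4$ before $(H(f)+H(f^*))$, and that this times $4v_0$ yields $16 v_0 u_0^{-2}$ — which is worse than the stated $8v_0 u_0^{-2}$, so I would expect either a sharper treatment of the $U$-versus-$E$ reduction (perhaps only $2v_0$ is needed there, since $\psi - \psi^*$ is $v$-independent and the $L^1$ of $\nabla$ only sees $E - E^*$, a single copy) or a sharper bound $\int v^2 f \le 2H(f)$ is not available but $\int \tfrac{v^2}{2} f \le H(f)$ is, giving the factor $2$. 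Tracking this precisely is the only real content beyond the two Cauchy–Schwarz applications and the $|K|\le 1$ bound.
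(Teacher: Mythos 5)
Your proposal follows essentially the same route as the paper: reduce $\Vert\psi-\psi^*\Vert_{W^{1,1}(\mathbb{T}\times[-v_0,v_0])}$ to $4v_0\Vert f-f^*\Vert_{L^1(\mathbb{T}\times\mathbb{R})}$ via the kernel bound $|K|\le 1$, then split at $|v|=u_0$ using Cauchy--Schwarz for the bulk and the kinetic-energy weight for the tail. The only flaw is the arithmetic slip you already flag at the end: since $\int \frac{v^2}{2}f \le H(f)$, one has $\frac{2}{u_0^2}\bigl(\int\frac{v^2}{2}f+\int\frac{v^2}{2}f^*\bigr)\le\frac{2}{u_0^2}\bigl(H(f)+H(f^*)\bigr)$, not $\frac{4}{u_0^2}$, and with that correction the product with $4v_0$ gives exactly the stated $\frac{8v_0}{u_0^2}$.
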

	\begin{proof}
		We first estimate the $L^{1}$ norm of $f-f^*$ as follows:
		\begin{flalign*}
			&\Vert \psi-\psi^* \Vert_{W^{1,1}(\mathbb{T}\times[-v_0, v_0])}=\int_{-v_0}^{v_0}\int_{\mathbb{T}}\vert U(x, t)-U^*(x) \vert dxdv+\int_{-v_0}^{v_0}\int_{\mathbb{T}}\vert E(x, t)-E^*(x) \vert dxdv\\
			&=2v_{0}\left(\int_{\mathbb{T}} \left\vert \int_{0}^{x}\int_{\mathbb{T}} K(z, y)\int_{-\infty}^{\infty} (f(y, v, t)-f^*(y, v))dvdydz \right\vert dx\right)\\
			&\quad+2v_{0}\left(\int_{\mathbb{T}} \left\vert \int_{\mathbb{T}} K(x, y)\int_{-\infty}^{\infty} (f(y, v, t)-f^*(y, v))dvdy \right\vert dx\right)\\
			&\leq 4v_0 \left( \int_{\mathbb{T}}\int_{-\infty}^{\infty} \left\vert f(x, v, t)-f^*(x, v) \right\vert dvdx \right)=4v_{0} \Vert f-f^* \Vert _{L^1(\mathbb{T}\times\mathbb{R})}.
		\end{flalign*}\\
		\indent
		Next, we bound $\Vert f-f^* \Vert _{L^1(\mathbb{T}\times\mathbb{R})}$ in terms of $L^{2}$-norm of $f-f^*$ and the total energy of $f$ and $f^*$. This is done by the following Chebyshev argument:
		\begin{flalign*}
			&\Vert f-f^* \Vert _{L^1(\mathbb{T}\times\mathbb{R})}=\int_{\mathbb{T}}\int_{-u_0}^{u_0} \vert f(x, v, t)-f^*(x, v) \vert dvdx +\int_{\mathbb{T}}\int_{\vert v \vert\geq u_0}\vert f(x, v, t)-f^*(x, v) \vert dvdx \\&\leq \left( 2u_0 \int_{\mathbb{T}}\int_{-u_0}^{u_0} \vert f(x, v, t)-f^*(x, v) \vert^2 dvdx  \right)^{\frac{1}{2}}+\frac{2}{u_0^2} \int_{\mathbb{T}}\int_{\vert v \vert\geq u_0} \frac{v^2}{2} \left( f(x, v, t)+f^*(x, v) \right) dvdx\\
			&\leq (2u_0)^{\frac{1}{2}} \Vert f-f^* \Vert_{L^2(\mathbb{T}\times\mathbb{R})} +\frac{2}{u_{0}^2}\left( H(f)+H(f^*) \right). 
		\end{flalign*}
		Combining the two inequalities gives the desired result.
	\end{proof}
	\section{Stability of $f$ near $f^*$}
	Having the bound of $W^{1, 1}$ norm of $\psi-\psi^*$ in terms of $\Vert f-f^* \Vert_{L^2(\mathbb{T}\times\mathbb{R})}$, we need to bound $\Vert f-f^* \Vert_{L^2(\mathbb{T}\times\mathbb{R})}$ in terms of initial conditions in order to prove the $W^{1, 1}$ stability of the stream function. This can be done by the Energy-Casimir method. We essentially follow \cite{Casimir}.
	More precisely, define the Casimir operator by
	\begin{flalign}
		C(f):=\int_{\mathbb{T}}\int_{\mathbb{R}} \Lambda(f(x, v))dvdx
	\end{flalign}
	where we define $\Lambda$ as follows.\\
	\indent
	Let $E_{min}=\inf_{x\in\mathbb{T}}U^*(x)$. Then, $$\varphi: [E_{min}, E_{max}]\to [0, \varphi(E_{min})]=[0, \varphi_{max}]$$ is bijective. \\
	\indent 
	Thus, $$\varphi^{-1}:[0, \varphi_{max}]\to [E_{min}, E_{max}]$$ is well defined. Define
	\begin{equation}
		\Lambda(\zeta)=-\int_{0}^{\zeta}\varphi^{-1}(\eta)d\eta,\qquad \zeta\in [0, \varphi_{max}]
	\end{equation}  
	and extend $\Lambda$ to $[0, \infty)$ by setting $$\Lambda(\zeta)=\Lambda(\varphi_{max})+\Lambda'(\varphi_{max})(\zeta-\varphi_{max})+\frac{1}{2}\Lambda''(\varphi_{max})\left(\zeta-\varphi_{max}\right)^2$$ for $\zeta>\varphi_{max}$ so that $\Lambda\in C^2((0, \infty))$.
	
	Since $f$ is constant along the measure preserving flow $\Phi$ described above, we see that this is a conserved quantity for the Valsov-Poisson system. Moreover, it is shown in \cite{Casimir} that 
	the following holds:\footnote{In \cite{Casimir}, these were proven in three-dimensional case, but the proof is the same for one-dimensional case.}
	\begin{enumerate}
		\item $f^*$ is a critical point of $H_C(f):=H(f)+C(f)$.
		\item There exists $c', c>0$ depending on $f^*$ such that the following holds:
		\begin{equation}
			 H_C(f)-H_C(f^*)\geq c\Vert f-f^* \Vert_{L^2(\mathbb{T}\times\mathbb{R})}^{2}
		\end{equation}
	\end{enumerate}
	In addition, we prove:
	\begin{proposition}
		Let $C_1>0$ be an arbitrarily large constant. Then, there exists a constant $C_2>0$ depending on $C_1$ and $f^*$ such that for all $f\leq C_1$ the following inequality holds:
		\begin{flalign*}
			\left\vert H_C(f)-H_C(f^*) \right\vert\leq C_2 \left( \int_{\mathbb{T}}\int_{\mathbb{R}} (1+v^2)\vert f-f^* \vert dvdx  \right).
		\end{flalign*}
	\end{proposition}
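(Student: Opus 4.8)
The plan is to decompose the difference of the energy--Casimir functionals into its kinetic, electrostatic, and Casimir parts,
\[
H_C(f)-H_C(f^*)=\int_{\mathbb{T}}\int_{\mathbb{R}}\frac{v^2}{2}(f-f^*)\,dv\,dx+\frac12\int_{\mathbb{T}}\big(E^2-(E^*)^2\big)\,dx+\int_{\mathbb{T}}\int_{\mathbb{R}}\big(\Lambda(f)-\Lambda(f^*)\big)\,dv\,dx,
\]
and to bound each of the three terms by a fixed multiple of $\int_{\mathbb{T}}\int_{\mathbb{R}}(1+v^2)|f-f^*|\,dv\,dx$. The first term is handled immediately, since $\big|\tfrac{v^2}{2}(f-f^*)\big|\le\tfrac12(1+v^2)|f-f^*|$ pointwise.

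For the electrostatic term I would first record the a priori bound $\|E\|_{L^\infty(\mathbb{T})},\,\|E^*\|_{L^\infty(\mathbb{T})}\le 2$, which follows from $|K(x,y)|\le 1$ together with the charge normalisations $\int_0^1\rho=\int_0^1\rho_e=1$ and the nonnegativity of $\rho,\rho_e$. Writing $E^2-(E^*)^2=(E-E^*)(E+E^*)$ and noting that $E(x)-E^*(x)=\int_0^1 K(x,y)\big(\rho_e^*(y)-\rho_e(y)\big)\,dy$, so that $\|E-E^*\|_{L^\infty(\mathbb{T})}\le\|\rho_e-\rho_e^*\|_{L^1(\mathbb{T})}\le\|f-f^*\|_{L^1(\mathbb{T}\times\mathbb{R})}$, the electrostatic term is controlled by $2\|f-f^*\|_{L^1(\mathbb{T}\times\mathbb{R})}\le 2\int_{\mathbb{T}}\int_{\mathbb{R}}(1+v^2)|f-f^*|\,dv\,dx$.

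The Casimir term is the only one requiring genuine care. Since $f(\cdot,\cdot,0)\ge 0$ is preserved along the measure-preserving characteristic flow, $0\le f\le C_1$, while $f^*=\varphi\big(\tfrac12 v^2+U^*\big)$ takes values in $[0,\varphi_{max}]$; hence both $f$ and $f^*$ range in the compact interval $I:=[0,\max(C_1,\varphi_{max})]$, and it suffices to show that $\Lambda$ is Lipschitz on $I$ with constant depending only on $C_1$ and $f^*$. On $[0,\varphi_{max}]$ one has $\Lambda'(\zeta)=-\varphi^{-1}(\zeta)$, which is continuous up to $\zeta=0$ and bounded by $\max\{|E_{min}|,|E_{max}|\}$ (this uses only the continuity of $\varphi^{-1}$, so no hypothesis on $\varphi'(E_{max})$ is needed); on $[\varphi_{max},\infty)$, $\Lambda'$ is the affine function matching value and slope at $\varphi_{max}$, with $\Lambda''(\varphi_{max})=-1/\varphi'(E_{min})$ finite because $\varphi'(E_{min})<0$. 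Thus $\Lambda'$ is bounded on $I$ by a constant $L=L(C_1,f^*)$, giving $|\Lambda(f)-\Lambda(f^*)|\le L|f-f^*|$ pointwise and hence the bound $L\|f-f^*\|_{L^1(\mathbb{T}\times\mathbb{R})}\le L\int_{\mathbb{T}}\int_{\mathbb{R}}(1+v^2)|f-f^*|\,dv\,dx$ for the Casimir term. Summing the three estimates proves the Proposition with $C_2=\tfrac52+L$.

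The only real obstacle is the bookkeeping in this last step: one must verify that the piecewise definition of $\Lambda$ produces a function with bounded (indeed continuous) derivative up to the endpoint $\zeta=0$ and across the junction $\zeta=\varphi_{max}$, so that the resulting Lipschitz constant on $I$ depends only on $C_1$ and the fixed profile $f^*$ and not on any further structure of $f$. Everything else reduces to elementary estimates using $|K|\le 1$, the charge normalisation, and the Lipschitz bound on $\Lambda$; note in particular that the finiteness of all integrals involved is guaranteed once $\int_{\mathbb{T}}\int_{\mathbb{R}}(1+v^2)|f-f^*|<\infty$ (part of the hypothesis of the theorems these estimates feed into) together with $\|f\|_{L^1}=\|f^*\|_{L^1}=1$.
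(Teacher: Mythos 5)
Your proposal is correct and follows essentially the same route as the paper: decompose $H_C(f)-H_C(f^*)$ into kinetic, electrostatic, and Casimir parts, use $|E|,|E^*|\le 2$ and $|K|\le 1$ for the middle term, and a Lipschitz bound for $\Lambda$ on the range of $f$ and $f^*$ for the last. You are somewhat more explicit than the paper on two points the paper leaves tacit, namely the estimate $\Vert E-E^*\Vert_{L^\infty}\le\Vert f-f^*\Vert_{L^1}$ and the verification that $\Lambda'$ stays bounded on $[0,\max(C_1,\varphi_{max})]$ (including across the junction at $\varphi_{max}$, where $\Lambda''(\varphi_{max})=-1/\varphi'(E_{min})$ is finite since $\varphi'(E_{min})<0$); both fill in details that the paper's constant $L$ and final inequality implicitly rely on.
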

	\begin{proof}
		\begin{flalign*}
			&\vert H_C(f)-H_C(f^*) \vert\\
			&\leq \frac{1}{2}\int_{\mathbb{T}}\int_{\mathbb{R}} v^2 \vert f-f^*\vert dvdx+\frac{1}{2}\int_{\mathbb{T}} \vert E(x)^2-E^*(x)^2\vert dx+\int_{\mathbb{T}}\int_{\mathbb{R}}\vert\Lambda(f)-\Lambda(f^*)\vert dvdx\\
			&\leq \frac{1}{2}\int_{\mathbb{T}}\int_{\mathbb{R}} v^2 \vert f-f^*\vert dvdx+\frac{1}{2}\int_{\mathbb{T}} \vert E(x)-E^*(x)\vert\vert E(x)+E^*(x) \vert dx\\
			&\quad+\int_{\mathbb{T}}\int_{\mathbb{R}} L \vert f-f^*\vert dvdx \cdots(*)	
		\end{flalign*}
		Note that
		\begin{flalign*} 
			\vert E(x)\vert =\left\vert\int_{0}^{1}K(x, y)\left[\rho(y)-\int_{-\infty}^{\infty}f(y, v)dv\right]dy\right\vert\leq\int_{0}^{1}\left[\rho(y)+\int_{-\infty}^{\infty}f(y, v, t)dv\right]dy= 2
		\end{flalign*}
		and similarly, $\vert E^*(x) \vert\leq 2$.\\
		\indent
		Thus, we have
		\begin{flalign*}
			(*)&\leq \frac{1}{2}\int_{\mathbb{T}}\int_{\mathbb{R}} v^2 \vert f-f^*\vert dvdx+2\int_{\mathbb{T}} \vert E(x)-E^*(x)\vert dx+\int_{\mathbb{T}}\int_{\mathbb{R}} L \vert f-f^*\vert dvdx\\
			&\leq C_2 \left( \int_{\mathbb{T}}\int_{\mathbb{R}}(1+v^2)\vert f(x, v)-f^*(x, v) \vert dvdx \right).
		\end{flalign*}
	\end{proof}
	With these results, using that $H_C$ is a conserved quantity, we have for all solutions $f$ for (\ref{Vlasov}) satisfying $f(x, v, 0)\leq C_1$,
	\begin{flalign}
		\begin{split}
			\label{ineq_2}
			\Vert f(\cdot, \cdot, t)-f^*(\cdot, \cdot) \Vert_{L^2(\mathbb{T}\times\mathbb{R})}^2&\leq \frac{1}{c} \left\vert H_C(f(\cdot, \cdot, t))-H_C(f^*(\cdot, \cdot)) \right\vert=\frac{1}{c} \left\vert H_C(f(\cdot, \cdot, 0))-H_C(f^*(\cdot, \cdot)) \right\vert\\
			&\leq \frac{C_2}{c}  \int_{\mathbb{T}}\int_{\mathbb{R}} (1+v^2)\vert f(x, v, 0)-f^*(x, v) \vert dvdx.
		\end{split}
	\end{flalign}
	\section{Proof of Theorem \ref{Main} and Theorem \ref{Corr}}
	\subsection{Proof of Theorem \ref{Main}}
	We now prove Theorem \ref{Main}. Using Lemma \ref{ineq_1} and equation (\ref{ineq_2}), we obtain
	\begin{flalign*}
		&\Vert \psi(\cdot, \cdot, t)-\psi^*(\cdot, \cdot) \Vert_{W^{1,1}(\mathbb{T}\times[-v_0, v_0])}\\&\leq 4v_0(2u_0)^{\frac{1}{2}} \Vert f(\cdot, \cdot, t)-f^*(\cdot, \cdot) \Vert_{L^2(\mathbb{T}\times\mathbb{R})}+\frac{8v_0}{u_{0}^2}\left( H(f(\cdot, \cdot, t))+H(f^*(\cdot, \cdot)) \right)\\
		&\leq 4v_0(2u_0)^{\frac{1}{2}}\left[\frac{C_2}{c}  \int_{\mathbb{T}}\int_{\mathbb{R}} (1+v^2)\vert f(x, v, 0)-f^*(x, v) \vert dvdx \right]^{\frac{1}{2}} +\frac{8v_0}{u_{0}^2}\left( H(f(\cdot, \cdot, 0))+H(f^*(\cdot, \cdot)) \right)\\
		&\leq 4v_0(2u_0)^{\frac{1}{2}}\left[\frac{C_2}{c}  \int_{\mathbb{T}}\int_{\mathbb{R}} (1+v^2)\vert f(x, v, 0)-f^*(x, v) \vert dvdx \right]^{\frac{1}{2}} +\frac{8v_0}{u_{0}^2}\left( 2H(f^*(\cdot, \cdot))+\left\vert H(f(\cdot, \cdot, 0))-H(f^*(\cdot, \cdot)) \right\vert \right)\\
		&\leq 4v_0(2u_0)^{\frac{1}{2}}\left[\frac{C_2}{c}  \int_{\mathbb{T}}\int_{\mathbb{R}} (1+v^2)\vert f(x, v, 0)-f^*(x, v) \vert dvdx \right]^{\frac{1}{2}}\\
		&\quad+\frac{8v_0}{u_{0}^2}\left( 2H(f^*(\cdot, \cdot))+4\right)+ \frac{8 v_0}{u_0^2} \left( \int_{\mathbb{T}}\int_{\mathbb{R}} (1+v^2)\vert f(x, v, 0)-f^*(x, v) \vert dvdx  \right).\\
	\end{flalign*}
	\indent
	Now, pick $\varepsilon>0$. Then, by choosing $u_0$ large enough, we obtain
	$$\frac{8v_0}{u_{0}^2}\left( 2H(f^*(\cdot, \cdot))+4\right)< \frac{\varepsilon}{2}$$
	\indent
	Then, for chosen fixed $u_0$, we can choose $\delta>0$ such that 
	\begin{flalign*}
		&4v_0(2u_0)^{\frac{1}{2}}\left[\frac{C_2}{c}  \int_{\mathbb{T}}\int_{\mathbb{R}} (1+v^2)\vert f(x, v, 0)-f^*(x, v) \vert dvdx \right]^{\frac{1}{2}}+ \frac{8 v_0}{u_0^2} \left( \int_{\mathbb{T}}\int_{\mathbb{R}} (1+v^2)\vert f(x, v, 0)-f^*(x, v) \vert dvdx  \right)< \frac{\varepsilon}{2}
	\end{flalign*}
	proviede that $\int_{\mathbb{T}}\int_{\mathbb{R}} (1+v^2)\vert f(x, v, 0)-f^*(x, v) \vert dvdx <\delta$.
	Thus, we have that whenever $$\int_{\mathbb{T}}\int_{\mathbb{R}} (1+v^2)\vert f(x, v, 0)-f^*(x, v) \vert dvdx <\delta$$ holds, the inequality $$\Vert \psi(\cdot, \cdot, t)-\psi^*(\cdot, \cdot) \Vert_{W^{1,1}(\mathbb{T}\times[-v_0, v_0])}< \epsilon$$ is satisfied. By Theorem \ref{Twisting}, the twisting occurs for such $f$.
	\subsection{Proof of Theorem \ref{Corr}}
	The proof essentially follows that of Theorem 4.5 in \cite{Drivas}. The biggest difference is that in our case, the phase space is noncompact, so that we have to prevent the flow escaping from $M=\operatorname{supp}(f^*)$ too much. To do this, we have to use the fact that the $L^2$ mass is initially concentrated in $M$ and is stable, so that it is impossible to lose the $L^2$ mass in $M$ too much.  \\
	\indent
	We now start the proof. Let $\epsilon<\delta$ be sufficiently small, where $\delta$ is the one found in Theorem \ref{Main}, and let $\eta>0$ be a small number which will be specified later. Since $f^*(x, v)=\varphi\left(\frac{1}{2}v^2+U^*(x)\right)$, by assumptions on function $\varphi$, there exists $\rho>0$ such that
		$$\left\vert S_1 \right\vert=\left\vert \left\{ (x, v) \,:\,0<f^*(x, v, 0)<\rho \right\} \right\vert<\frac{\eta}{10}$$ 
	where $\left\vert A \right\vert$ denotes the Lebesgue measure of $A$.\\
	\indent
	Next, by the fact that the Casimir operator $C$ and $H$ are invariant under the flow and the properties of $H_C$, we have the following:
	\begin{flalign*}
		\left\Vert f(\cdot, \cdot, t)-f^*\right\Vert_{L^2(\mathbb{T}\times{R})}&\leq \frac{1}{c} \vert H_C(f(\cdot, \cdot, t))-H_C(f^*)\vert=\frac{1}{c} \vert H_C(f(\cdot, \cdot, 0))-H_C(f^*)\vert\\&\leq \frac{C_2}{c}  \int_{\mathbb{T}}\int_{\mathbb{R}} (1+v^2)\vert f(x, v, 0)-f^*(x, v) \vert dvdx
		\leq\frac{C_2\epsilon}{c}.
	\end{flalign*}
	\indent
	Now, let $S_2=\left\{ (x, v)\subset M\,:\, \Phi_t^{-1}((x, v))\notin M \right\}$. We want to obtain an upper bound on $\vert S_2\vert$. We consider two parts: $S_1\cap S_2$, and $S_2\backslash S_1$.
	The first one is obtained as follows:
	$$\vert S_1\cap S_2\vert \leq \vert S_1\vert <\frac{\eta}{10}.$$
	To obtain the estimate on $\vert S_2\backslash S_1\vert$, we use the mass argument as follows.\\
	\indent
	First, we note that by the definition of $S_1$ and $S_2$, for all $(x, v)\in S_2\backslash S_1$, we have that $f^*(x, v)\geq \rho$. Therefore, we have that
	\begin{flalign*}
		\frac{C_2\epsilon}{c}&>\left( \int_{S_2\backslash S_1} \vert f(x, v, t)-f^*(x, v) \vert^2 dxdv \right)^{\frac{1}{2}}\\&\geq \left( \int_{S_2\backslash S_1} \vert f^*(x, v)\vert^2 dxdv \right)^{\frac{1}{2}}-\left( \int_{S_2\backslash S_1} \vert f(x, v, t) \vert^2 dxdv \right)^{\frac{1}{2}}\\&> \rho\vert S_2\backslash S_1 \vert^{\frac{1}{2}}-\left( \int_{\Phi_t^{-1}\left(S_2\backslash S_1\right)} \vert f(x, v, 0) \vert^2 dxdv \right)^{\frac{1}{2}}\\&\geq\rho\vert S_2\backslash S_1 \vert^{\frac{1}{2}}-\left( \int_{(\mathbb{T}\times\mathbb{R})\backslash M} \vert f(x, v, 0) \vert^2 dxdv \right)^{\frac{1}{2}} \geq \rho\vert S_2\backslash S_1 \vert^{\frac{1}{2}}-\frac{C_2\epsilon}{c}.
	\end{flalign*}
	\indent
	Thus, we have $$\left\vert S_3\backslash (S_1\cup S_2) \right\vert <\left( \frac{2C_2\epsilon}{\rho c} \right)^2<\frac{\eta}{10}$$ if we further replace $\epsilon$ by a smaller number if necessary.\\
	\indent
	Gathering these results, we see that $\vert S_3\vert<\frac{\eta}{5}$. This ensures the  ability to control the amount(area) escaping $M$ at an arbitrary time $t>0$.\\
	\indent
	Finally, as in \cite{Drivas}, let us consider the universal cover $\mathbb{R}\times\mathbb{R}$ of $\mathbb{T}\times\mathbb{R}$ and lift the dynamics to that cover. Let $\widetilde{M}$ be the inverse image of $M$ with respect to the projection mapping. It is shown in the proof of Lemma 2.9 in \cite{Drivas} that there exists $\mu, C>0$, and two sets $A(t),\, B(t)\subset \widetilde{M}$ such that $$\vert A(t)\vert, \, \vert B(t)\vert >\mu$$ and $$\operatorname{dist}(A(t), B(t))\geq Ct$$
	\indent
	We now pick $\eta=\mu$. Then, by the previous estimate on $\vert S_3\vert$, we have that
	$$\vert \Phi_t^{-1}(A(t))\cap M \vert>\frac{4\eta}{5}\quad \text{and}\quad\vert \Phi_t^{-1}(B(t))\cap M \vert>\frac{4\eta}{5}. $$
	\indent
	By assumption, there exist two distinct level sets of $f(\cdot, \cdot, 0)$ with different values contained in $M=\operatorname{supp}(f^*)$ such that they enclose areas sufficiently close to the area of $M$. If the remaining areas are smaller than $\frac{\eta}{10}$, we see that these level curves meet both $\Phi_t^{-1}(A(t))$ and $\Phi_t^{-1}(B(t))$. From this stage, we follow the proof of Theorem 4.5 in \cite{Drivas}, which we briefly sketch here.\\
	\indent
	Let us denote the level sets mentioned by $\gamma_1$ and $\gamma_2$. Then, since each $\gamma_1$ and $\gamma_2$ intersect with both $\Phi_t^{-1}(A(t))$ and $\Phi_t^{-1}(B(t))$, we see that if we let $\widetilde{\Phi}_t$ be the lift of the flow map $\Phi_t$ to the cover, diameters of $\widetilde{\Phi}_t(\gamma_1)$ and $\widetilde{\Phi}_t(\gamma_2)$ are estimated from below by $\geq Ct$. \\
	\indent
	We finally note that
	$$\Vert \nabla f \Vert_1\geq\int_{\mathbb{T}}\int_{\mathbb{R}}\vert \partial_{v}f(x, v)\vert dvdx $$
	and that by the fundamental theorem of calculus, $\int_{\mathbb{R}}\vert \partial_{v}f(x, v)\vert dv$ is bounded from below by the number of alternating intersections that the line $\{x\}\times\mathbb{R}$
	makes with $\gamma_1$ and $\gamma_2$. Moreover, it can be shown by using the intermediate value theorem that this quantity is simply bounded also by $\geq ct$. This completes the proof.
	\section{Some Results for Particular Cases}
	So far, we have assumed the existence of stationary solutions, whose existence is not clear. Thus, in this section, we consider some special cases of charge distributions to gain it. We will discuss these in two cases: the case with uniform background charge and the case with a background charge distributed close to a Dirac delta distribution.
	\subsection{Explicit Formula for the Stationary State in a Uniform Background Charge case}
	We first look at the situation when the background charge is distributed uniformly in space. In this circumstance, many stable stationary solutions are available. 
	By the results above, we know that any $f^*$ of the form
	$$f^*(x, v)=\varphi\left(\frac{v^2}{2}+U^*(x)\right)$$
	is stable in twisting. Here, $$U^*(x)=\int_{0}^{x}\int_{0}^{1}K(z, y)\left[1-\int_{-\infty}^{\infty}f^{*}(y, v)dv\right]dydz.$$
	We know that the solution is stationary if $U^*(x)$ is constant. This occurs if
	$$\int_{\infty}^{\infty}f^*(y, v)dv=1$$
	which implies that $U^*(x)=0$. If we plug this in to the expression of $f^*$, we get $$f^*(x, v)=\varphi\left(\frac{v^2}{2}\right).$$ 
	Everything is consistent if we only assume that
	$$\int_{\infty}^{\infty}\varphi\left(\frac{v^2}{2}\right)dv=1.$$
	Thus, we conclude that there are many stationary solutions stable in twisting in this case. 
	\subsection{The Desingularization of Delta Distribution}
	 In this section, we show the existence of stationary solutions for the Vlasov-Poisson system on $\mathbb{T}$ with a distribution close to the Dirac delta as its background charge.\\
	More precisely, we consider the case when the ion density is given as follows:
	\begin{flalign*}
		\rho_{\epsilon}(x)=
		\begin{cases}
			\frac{1}{\epsilon} & x\in[0, \frac{\epsilon}{2})\cup (1-\frac{\epsilon}{2}, 1]\\
			0 & \text{otherwise}.
		\end{cases}
	\end{flalign*}
	Here, $\epsilon$ is a positive constant sufficiently small. Note that the formal limit $\epsilon\to0^+$ corresponds to the Dirac delta case. 
	We prove the existence of stationary solutions for the case above when $\epsilon>0$ is sufficiently small. 
	\begin{theorem}
		Consider the following Vlasov-Poisson system on a torus:
		\begin{flalign*}
			&\partial_{t}f(x, v, t)+v\cdot\partial_{x}f(x, v, t)-E(x,t)\cdot\partial_{v}f(x, v, t)=0\\
			&E(x, t)=\int_{0}^{1}K(x, y)\left[\rho_{\epsilon}(y)-\int_{-\infty}^{\infty}f(y, v, t)dv\right]dy
		\end{flalign*}
		where $K$ and $\rho_{\epsilon}$ are given as before. Then, there exists $\epsilon_0>0$ such that for all $\epsilon\in(0, \epsilon_0)$, the stationary solution for the equation above exists.
	\end{theorem}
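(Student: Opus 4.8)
The plan is to reduce the existence of a stationary state of the form \eqref{Stationary} to the solvability of a scalar semilinear two--point problem on $\mathbb{T}$, and then to solve that problem by an ODE shooting argument that exploits the reflection symmetry of $\rho_\epsilon$.

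First I would fix once and for all a convenient profile $\varphi$ which, besides the three standing hypotheses, is \emph{bounded} (for instance a strictly decreasing $C^1$ function coinciding with $c(E_{max}-E)^2$ near $E_{max}$, so that $\varphi'(E_{max})=0$, and staying bounded as $E\to-\infty$). Setting
$$g(E):=\int_{\mathbb{R}}\varphi\left(\tfrac12 v^2+E\right)\,dv,$$
one checks that $g$ is finite, continuous, nonnegative, nonincreasing, strictly decreasing where positive, equals $0$ on $[E_{max},\infty)$, satisfies $g(E)\to\infty$ as $E\to-\infty$, and has the sublinear growth $g(E)=O(|E|^{1/2})$. Writing $f^*=\varphi(\tfrac12 v^2+U^*)$ one has $\rho_e=g(U^*)$, and differentiating the definition of $U^*$ twice (using $\partial_x\!\int_0^1K(x,y)\sigma(y)\,dy=-\sigma(x)$) shows that $f^*$ is a stationary solution precisely when $U^*$ solves
$$-(U^*)''=\rho_\epsilon-g(U^*)\qquad\text{on }\mathbb{T},$$
up to the additive normalization inherited from \eqref{Stationary}, which we may fix freely by replacing $\varphi$ with a translate (this preserves all hypotheses). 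The electric field produced by the formula is a genuine periodic function on $\mathbb{T}$ exactly when $\int_0^1 g(U^*)=1$, and I will arrange this automatically.

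Since $\rho_\epsilon$ is even about $x=0$ and about $x=\tfrac12$, I would seek $U^*$ with the same symmetry, so that $(U^*)'(0)=(U^*)'(\tfrac12)=0$; integrating the equation over $[0,\tfrac12]$ then gives $\int_0^{1/2}g(U^*)=\int_0^{1/2}\rho_\epsilon=\tfrac12$, i.e.\ neutrality holds for free inside the symmetric class. Thus it suffices to solve the Neumann problem $u''=g(u)-\rho_\epsilon$ on $[0,\tfrac12]$ with $u'(0)=u'(\tfrac12)=0$, which I would do by shooting: for $a\in\mathbb{R}$ let $u_a$ solve this ODE with $u_a(0)=a$, $u_a'(0)=0$ (global on $[0,\tfrac12]$ since $g$ is sublinear), and put $\Psi_\epsilon(a):=u_a'(\tfrac12)=\int_0^{1/2}g(u_a)-\tfrac12$, a continuous function of $a$. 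For $a>E_{max}+\tfrac14$ the solution stays above $E_{max}$ on all of $[0,\tfrac12]$, so $g(u_a)\equiv 0$ and a direct computation gives $\Psi_\epsilon(a)=-\tfrac12<0$. For $a$ sufficiently negative, $u_a$ remains very negative throughout $[0,\tfrac12]$: the large forcing $1/\epsilon$ acts only on the short interval $[0,\epsilon/2]$ of total impulse $\tfrac12$, and on $[\epsilon/2,\tfrac12]$ the convexity $u_a''=g(u_a)\ge 0$ limits the growth rate of $u_a$ to $O(g(u_a))=O(|u_a|^{1/2})$, which is too slow to leave the very--negative regime within a bounded interval; hence $g(u_a)\ge g(a/4)\to\infty$ and $\Psi_\epsilon(a)\to+\infty$. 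By the intermediate value theorem there is $a^*$ with $\Psi_\epsilon(a^*)=0$, and $u_{a^*}$ extended evenly to $\mathbb{T}$ is the desired $U^*$; then $f^*=\varphi(\tfrac12 v^2+U^*)\in C^1(\mathbb{T}\times\mathbb{R})$ is a stationary solution, and using the symmetry identity $\int_0^1 y\,\sigma(y)\,dy=0$ one checks that this $U^*$ indeed satisfies the integral formula in \eqref{Stationary}.

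The step I expect to require the most care is the negative--$a$ asymptotics of $\Psi_\epsilon$ when $\epsilon$ is small, where the forcing $1/\epsilon$ is large; here the energy identity $\tfrac12(u_a')^2-G(u_a)=\text{const}$ (with $G'=g$) on the field--free interval, together with the boundedness of $\varphi$, is what keeps $u_a$ uniformly negative as $a\to-\infty$, and this is where the smallness $\epsilon<\epsilon_0$ enters if one does not wish to track constants sharply. An alternative, genuinely perturbative route is to first solve the limiting problem $-U_0''=\delta_0-g(U_0)$ on $\mathbb{T}$ --- the same shooting, now with the interface condition $U_0'(0^+)=-\tfrac12$ replacing the Neumann condition, which produces a solution smooth off $0$ with a corner at $0$ --- and then to obtain $U^*_\epsilon$ for small $\epsilon$ by a contraction mapping around $U_0$, using $\|\rho_\epsilon-\delta_0\|_{H^{-1}(\mathbb{T})}=O(\epsilon^{1/2})$ and invertibility of the linearization $-\partial_x^2+g'(U_0)$ on the reflection--symmetric subspace of $H^1(\mathbb{T})$; this operator is Fredholm of index zero and injective there, since the only solution of $-v''+g'(U_0)v=0$ on $(0,1)$ that could belong to its kernel is proportional to $U_0'$, which has a jump at $0$ and hence is inadmissible on $\mathbb{T}$ (after, if needed, an arbitrarily small perturbation of $\varphi$ to remove a resonance).
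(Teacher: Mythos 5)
Your proposal is correct and constitutes a genuinely different proof from the one in the paper. Both arguments exploit the reflection symmetry of $\rho_\epsilon$ to reduce the stationary problem to a two--point Neumann problem for the potential on $[0,\tfrac12]$, but you then solve the boundary value problem by shooting in the boundary value $a=U(0)$: the shooting map $a\mapsto U_a'(\tfrac12)=\int_0^{1/2}g(U_a)-\tfrac12$ is continuous, equals $-\tfrac12$ for $a$ large (since then $U_a>E_{max}$ throughout and $g(U_a)\equiv 0$), and tends to $+\infty$ as $a\to-\infty$, so the intermediate value theorem produces a zero. The paper instead fixes the concrete power--law profile $\varphi(E)=(c-E)_+^{5/2}$, which makes $g(U)=\tfrac{5\sqrt2\pi}{16}(c-U)^3$; it then exploits the first integral $\tfrac12(U')^2-\tfrac{5\sqrt2\pi}{64}(c-U)^4-\ldots$ on each of $[0,\epsilon/2]$ and $[\epsilon/2,\tfrac12]$ to collapse the whole problem to a system of two scalar integral equations $f_1(h,c,\epsilon)=f_2(h,c,\epsilon)=\tfrac12$ in the parameters $h=U(\epsilon/2)/\epsilon$ and $c$, solves that system explicitly at $\epsilon=0$ ($h=\tfrac18$, then a one--dimensional IVT in $c$), and finally invokes the implicit function theorem to push the solution to small $\epsilon>0$. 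Your choice of a \emph{bounded} profile $\varphi$ is exactly what makes $g$ sublinear ($g(E)=O(|E|^{1/2})$), which guarantees global existence of the shooting trajectories and lets the same intermediate--value argument work uniformly for every $\epsilon\in(0,1]$, not merely for $\epsilon$ small; the paper's approach, by contrast, yields an essentially explicit and computable $U_\epsilon$ for the physically natural polytropic ansatz, at the cost of the restriction to small $\epsilon$ and a Jacobian computation for the implicit function theorem. Two remarks. First, the step ``$U_a$ stays very negative throughout'' deserves one more line: the uniform impulse bound $\int_0^{1/2}\rho_\epsilon=\tfrac12$ gives the a priori lower bound $U_a\ge a-\tfrac12$, hence $g(U_a)\le g(a-\tfrac12)=O(|a|^{1/2})$, hence $U_a\le a+\tfrac18\,g(a-\tfrac12)\le a/2$ for $|a|$ large, which is what closes the argument. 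Second, you derive $-(U^*)''=\rho_\epsilon-g(U^*)$, whereas the paper works with $U''=\rho_\epsilon-g(U)$, the opposite sign; your sign is the one that actually follows from the kernel $K$ as defined (which has $\partial_x\!\int K(x,y)\sigma(y)\,dy=-\sigma(x)$), so this looks like an internal sign inconsistency of the paper rather than an error of yours, and in any case the shooting argument adapts without difficulty to either sign.
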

	To prove this theorem, as in the uniform charge case, we first assume that the stationary distribution function $f$ has the following form:
	$$f(x, v)=\varphi\left(\frac{v^2}{2}+U(x)\right)$$ 
	where $U(x)$ is the electric potential satisfying 
	$$E(x)=\frac{d}{dx}U(x).$$
	In addition, we make an ansatz for the function $\varphi$:
	\begin{flalign*}
		\varphi(E)=
		\begin{cases}
			(c-E)^{\frac{5}{2}} & E<c\\
			0 & \text{otherwise}
		\end{cases}
	\end{flalign*}
	Then, the stationary Vlasov-Poisson system is equivalent to the following Poisson equation:
	\begin{flalign*}
		\frac{d^2}{dx^2}U(x)&=\rho_{\epsilon}(x)-\int_{-\infty}^{\infty}\varphi\left(\frac{v^2}{2}+U(x)\right)dv
	\end{flalign*}
	We further assume that $c>0$, $U(0)=0$, and $U(x)\geq 0$ for all $x\in\mathbb{T}$. We will show that the assumption $U(x)\geq 0$ is consistent later. Then, the equation becomes:
	\begin{flalign*}
			\frac{d^2}{dx^2}U(x)&=\rho_{\epsilon}(x)-\int_{-\sqrt{2(c-U(x))}}^{\sqrt{2(c-U(x))}}\left(c-U(x)-\frac{v^2}{2}\right)^{\frac{5}{2}}dv=\rho_{\epsilon}(x)-\sqrt{2}\left( c-U(x) \right)^3\int_{-1}^{1} \left(1-u^2\right)^{\frac{5}{2}}du\\&=\rho_{\epsilon}(x)-\frac{5\sqrt{2}\pi}{16}(c-U(x))^3.
	\end{flalign*}
	We assume that the potential satisfies $U(x)=U(1-x)$. Then, we obtain the following conditions: $$U'(0)=U'\left(\frac{1}{2}\right)=0.$$
	We now aim to reduce this equation to equations involving some parameters.
	The first step is to integrate the equations in two regions $(0, \frac{\epsilon}{2})$ and $(\frac{\epsilon}{2}, \frac{1}{2})$ separately. First, we do this in the region $(0, \frac{\epsilon}{2})$. The equation in this case is:
	\begin{flalign*}
		\frac{d^2}{dx^2}U(x)=\frac{1}{\epsilon}-\frac{5\sqrt{2}\pi}{16}\left(c-U(x)\right)^3.
	\end{flalign*}
	Multiplying $U'(x)$ on both sides and integrating in $x$ becomes
	\begin{flalign*}
		&\frac{\left(U'(x)\right)^2}{2}-\frac{U(x)}{\epsilon}-\frac{5\sqrt{2}\pi}{64}(c-U(x))^4=\frac{\left(U'(0)\right)^2}{2}-\frac{U(0)}{\epsilon}-\frac{5\sqrt{2}\pi}{64}(c-U(0))^4=-\frac{5\sqrt{2}\pi}{64}c^4
	\end{flalign*}
	when $x\in[0, \frac{\epsilon}{2}]$. Similar calculation gives
	\begin{flalign*}
		&\frac{\left(U'(x)\right)^2}{2}-\frac{5\sqrt{2}\pi}{64}(c-U(x))^4=-\frac{5\sqrt{2}\pi}{64}\left(c-U\left(\frac{1}{2}\right)\right)^4 
	\end{flalign*}
	when $x\in[\frac{\epsilon}{2}, \frac{1}{2}]$.\\
	Now, putting $x=\frac{\epsilon}{2}$ gives
	\begin{flalign*}
		\frac{U(\frac{\epsilon}{2})}{\epsilon}-\frac{5\sqrt{2}\pi}{64}c^4=-\frac{5\sqrt{2}\pi}{64}\left(c-U\left(\frac{1}{2}\right)\right)^4
	\end{flalign*}
	and hence the following relation:
	\begin{flalign*}
		U\left(\frac{1}{2}\right)=c-\left( c^4-\frac{64}{5\sqrt{2}\pi \epsilon}U\left( \frac{\epsilon}{2} \right)  \right)^{\frac{1}{4}}.
	\end{flalign*}
	Note that $U\geq0$.
	
	Now, for $x\in [0, \frac{\epsilon}{2}]$, we have
	\begin{flalign*}
		1=\frac{U'(x)}{\sqrt{\frac{5\sqrt{2}\pi}{32}(-c^4+(c-U(x))^4)+\frac{2U(x)}{\epsilon}}}.
	\end{flalign*}
	Integrating in $x$ from 0 to $\frac{\epsilon}{2}$ gives
	\begin{flalign*}
		\frac{\epsilon}{2}=\int_{0}^{U(\frac{\epsilon}{2})}\frac{dy}{\sqrt{\frac{5\sqrt{2}\pi}{32}(-c^4+(c-y)^4)+\frac{2y}{\epsilon}}}.
	\end{flalign*}
	Similarly, we have
	\begin{flalign*}
		\frac{1-\epsilon}{2}&=\int_{U(\frac{\epsilon}{2})}^{U(\frac{1}{2})}\frac{dy}{\sqrt{\frac{5\sqrt{2}\pi}{32}((c-y)^4-(c-U(\frac{1}{2}))^4)}}=\int_{U(\frac{\epsilon}{2})}^{c-\left( c^4-\frac{64}{5\sqrt{2}\pi \epsilon}U\left( \frac{\epsilon}{2} \right)  \right)^{\frac{1}{4}}}\frac{dy}{\sqrt{\frac{5\sqrt{2}\pi}{32}((c-y)^4-c^4)+\frac{2U(\frac{\epsilon}{2})}{\epsilon}}}.
	\end{flalign*}
	Now, we make a change of variables. We set $h=\frac{U(\frac{\epsilon}{2})}{\epsilon}$. Then, we can rewrite the equation as follows:
	\begin{flalign*}
		\begin{cases}
			\frac{1}{2}=\int_{0}^{h} \frac{dz}{\sqrt{\frac{5\sqrt{2}\pi}{32}(-c^4+(c-\epsilon z)^4)+2z}}:=f_1(h, c, \epsilon)\\
			\frac{1}{2}=\frac{\epsilon}{2}+\int_{\epsilon h}^{c-\left(c^4-\frac{64h}{5\sqrt{2}\pi} \right)^{\frac{1}{4}}}\frac{dw}{\sqrt{\frac{5\sqrt{2}\pi}{32}((c-w)^4-c^4)+2h}}:=f_2(h, c, \epsilon).
		\end{cases}
	\end{flalign*}	
	Thus, we have reduced the problem for finding $h\,, c>0$ satisfying the above two equations for given $\epsilon>0$. We first do this when $\epsilon=0$.
	\begin{proposition}
		There exists $h\,, c>0$ satisfying $$f_1(h, c, 0)=0\quad\text{and}\quad f_2(h, c, 0)=0.$$
	\end{proposition}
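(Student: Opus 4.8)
The plan is to exploit the fact that at $\epsilon = 0$ the two reduced equations essentially decouple. Setting $\epsilon = 0$ in the integrand of $f_1$ annihilates the term $-c^4 + (c-\epsilon z)^4$, so
\[
f_1(h, c, 0) = \int_{0}^{h} \frac{dz}{\sqrt{2z}} = \sqrt{2h},
\]
independently of $c$. Hence the first equation $f_1(h,c,0) = \tfrac12$ forces the single value $h = h_0 := \tfrac18$, and it remains only to find $c>0$ with $f_2(h_0, c, 0) = \tfrac12$. Writing $\alpha := \tfrac{5\sqrt2\pi}{32}$ and $\beta := \tfrac{64}{5\sqrt2\pi}$ (so $\alpha\beta = 2$), the upper endpoint of the $f_2$-integral is $w^*(c) = c - (c^4 - \beta h_0)^{1/4}$, which is real precisely when $c \ge c_{\min} := (\beta h_0)^{1/4} > 0$, and $0 < w^*(c) < c$ for $c > c_{\min}$.

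Set $g(c) := f_2(h_0, c, 0) = \int_{0}^{w^*(c)} \frac{dw}{\sqrt{\phi_c(w)}}$ with $\phi_c(w) := \alpha\bigl((c-w)^4 - c^4\bigr) + 2h_0$. Since $w \mapsto (c-w)^4$ is strictly decreasing on $[0,c]$, the function $\phi_c$ is strictly decreasing there, with $\phi_c(0) = 2h_0 > 0$ and $\phi_c(w^*(c)) = 0$; thus the integrand is positive on $[0, w^*(c))$ and has only a simple-zero (inverse-square-root) singularity at the endpoint, so $g(c)$ is finite for every $c > c_{\min}$. I would then compute the two boundary behaviours. As $c \to \infty$, $w^*(c) \to 0$ and linearizing $\phi_c$ at $w=0$ (where $(c-w)^4 - c^4 = -4c^3 w + O(c^2 w^2)$) gives $g(c) = O(c^{-3}) \to 0$. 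As $c \to c_{\min}^+$ the radicand degenerates: at $c = c_{\min}$ one has $c_{\min}^4 = \beta h_0$, hence $2h_0 = \alpha\beta h_0 = \alpha c_{\min}^4$, so the $\pm c^4$ terms cancel and $\phi_{c_{\min}}(w) = \alpha(c_{\min}-w)^4$, whose reciprocal square root $\tfrac{1}{\sqrt\alpha\,(c_{\min}-w)^2}$ is not integrable near $w = c_{\min}$; since $w^*(c) \to c_{\min}$ and $\phi_c \to \phi_{c_{\min}}$ uniformly on each $[0,W]$ with $W < c_{\min}$, bounding $g(c) \ge \int_0^W \frac{dw}{\sqrt{\phi_c(w)}}$, letting $c \to c_{\min}^+$ and then $W \uparrow c_{\min}$, gives $g(c) \to +\infty$.

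To finish I would show $g$ is continuous on $(c_{\min}, \infty)$ and apply the intermediate value theorem: since $g$ takes values arbitrarily close to $0$ and arbitrarily large, there is $c^* \in (c_{\min}, \infty)$ with $g(c^*) = \tfrac12$, and then $(h_0, c^*)$ solves the system. I expect the continuity of $g$ to be the main obstacle, because its integrand blows up at the $c$-dependent endpoint $w^*(c)$. The natural remedy is the rescaling $w = w^*(c)\sigma$, $\sigma \in [0,1]$, which turns $g(c)$ into $\int_0^1 \frac{w^*(c)\,d\sigma}{\sqrt{\phi_c(w^*(c)\sigma)}}$; because $\phi_c$ has a simple zero at $w^*(c)$ with $\phi_c'(w^*(c)) = -4\alpha(c^4 - \beta h_0)^{3/4} < 0$, we may write $\phi_c(w^*(c)\sigma) = (1-\sigma)\Psi_c(\sigma)$ with $\Psi_c$ continuous and strictly positive on $[0,1]$ and depending continuously on $c$, so the rescaled integrand $\frac{w^*(c)}{\sqrt{(1-\sigma)\Psi_c(\sigma)}}$ is dominated, locally uniformly in $c$, by a constant multiple of $(1-\sigma)^{-1/2}$; dominated convergence then yields continuity. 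The remaining points — that $c^* > 0$ and that the resulting potential $U$ is nonnegative, consistent with the ansatz used to derive the ODE — are routine verifications.
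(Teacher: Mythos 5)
Your proposal is correct and takes essentially the same approach as the paper: solve the $\epsilon=0$ form of $f_1$ to force $h=\tfrac18$, then show $g(c)=f_2\bigl(\tfrac18,c,0\bigr)$ decreases from $+\infty$ to $0$ as $c$ runs over $\bigl(c_{\min},\infty\bigr)$ with $c_{\min}=\bigl(8/(5\sqrt2\pi)\bigr)^{1/4}$, and conclude by the intermediate value theorem. The paper reaches the same endpoint behaviour via the substitution $w=\bigl(c^4-8/(5\sqrt2\pi)\bigr)^{1/4}z$, which simultaneously isolates the divergent prefactor and makes the continuity of $g$ transparent, whereas you establish continuity separately through the rescaling $w=w^*(c)\sigma$; both arguments are sound.
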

	\begin{proof}
		We have from the first equation that $$\frac{1}{2}=\int_0^h \frac{dz}{\sqrt{2z}}=\sqrt{2h}$$ and thus $h=\frac{1}{8}$.
		Plugging this into the second equation gives
		$$\frac{1}{2}=\int_{0}^{c-\left(c^4-\frac{8}{5\sqrt{2}\pi}\right)^{\frac{1}{4}}}
		\frac{dw}{\sqrt{\frac{5\sqrt{2}\pi}{32}((c-w)^4-c^4)+\frac{1}{4}}}$$
		which is equivalent to 
		$$1=\int_{\left(c^4-\frac{8}{5\sqrt{2}\pi}\right)^{\frac{1}{4}}}^{c}
		\frac{dw}{\sqrt{\frac{5\sqrt{2}\pi}{128}(w^4-c^4)+\frac{1}{16}}}$$
		We set the value on the right-hand side by $g(c)$, which is a continuous function in $c$. We show the existence of $c>0$ by showing that $\lim_{c\to\left(\frac{8}{5\sqrt{2}\pi}\right)^{\frac{1}{4}}}g(c)=\infty$ and $\lim_{c\to\infty}g(c)=0$.\\
		Indeed, we have
		\begin{flalign*}
			g(c)&=\sqrt{\frac{128}{5\sqrt{2}\pi}}\left(c^4-\frac{8}{5\sqrt{2}\pi}\right)^{-\frac{1}{4}}\int_{1}^{\left(1-\frac{8}{5\sqrt{2}\pi c^4}\right)^{-\frac{1}{4}}}\frac{dz}{\sqrt{z^4-1}}.\\
		\end{flalign*}
		Since $$\lim_{c\to\left(\frac{8}{5\sqrt{2}\pi}\right)^{\frac{1}{4}}}\int_{1}^{\left(1-\frac{8}{5\sqrt{2}\pi c^4}\right)^{-\frac{1}{4}}}\frac{dz}{\sqrt{z^4-1}}=\int_{1}^{\infty}\frac{dz}{\sqrt{z^4-1}}<\infty \quad\text{and}\quad\lim_{c\to\left(\frac{8}{5\sqrt{2}\pi}\right)^{\frac{1}{4}}}\left(c^4-\frac{8}{5\sqrt{2}\pi}\right)^{-\frac{1}{4}}=\infty$$ we have
		$$\lim_{c\to\left(\frac{8}{5\sqrt{2}\pi}\right)^{\frac{1}{4}}}g(c)=\infty.$$
		Also, we have
		$$g(c)\leq\sqrt{\frac{128}{5\sqrt{2}\pi}}\frac{1}{c}\int_{1}^{\infty}\frac{dz}{\sqrt{z^4-1}}$$
		and thus $\lim_{c\to\infty}g(c)=0.$ Therefore, there exists $c_0>0$ such that $g(c_0)=1$.
	\end{proof}
	Next, we get the existence of such a pair for small $\epsilon>0$.
	\begin{proposition}
		There exists some $\epsilon_0>0$ such that for $0<\epsilon<\epsilon_0$, there exists a pair of positive numbers $(h, c)$ satisfying 
		$$f_1(h, c, \epsilon)=0, \quad f_2(h, c, \epsilon)=0.$$
	\end{proposition}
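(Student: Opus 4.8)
The plan is to deduce the existence of a solution pair $(h,c)$ for small $\epsilon>0$ from the $\epsilon=0$ solution $(h_0,c_0)=(\tfrac18,c_0)$ obtained in the previous proposition, via the implicit function theorem applied to the map $F(h,c,\epsilon)=(f_1(h,c,\epsilon),f_2(h,c,\epsilon))$. The three ingredients I need are: (i) $F$ is continuously differentiable in a neighborhood of $(h_0,c_0,0)$; (ii) $F(h_0,c_0,0)=0$, which is exactly the content of Proposition 6.1; and (iii) the partial Jacobian $\partial_{(h,c)}F(h_0,c_0,0)$ is invertible. Granting these, the implicit function theorem produces $\epsilon_0>0$ and a $C^1$ curve $\epsilon\mapsto(h(\epsilon),c(\epsilon))$ with $F(h(\epsilon),c(\epsilon),\epsilon)=0$ for $\epsilon\in(0,\epsilon_0)$; shrinking $\epsilon_0$ if necessary keeps $h(\epsilon),c(\epsilon)>0$ by continuity, which is what is claimed.

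The first step is the regularity and non-degeneracy analysis of the integrands. For $f_1$, the integrand is $\bigl(\tfrac{5\sqrt2\pi}{32}(-c^4+(c-\epsilon z)^4)+2z\bigr)^{-1/2}$; near $(h_0,c_0,0)$ the radicand is $2z+O(\epsilon)$, which is bounded below by a positive constant on $[\delta,h]$ and vanishes like $2z$ at $z=0$, so the integral is a convergent improper integral depending smoothly on the parameters — differentiation under the integral sign is justified by dominating the $z$-derivatives near $0$ by an integrable function (the $z^{-1/2}$ singularity is integrable, and it is not worsened by differentiating in $h$ or $c$, which only affects the bounded part). For $f_2$, one must check that the endpoint $c-(c^4-\tfrac{64h}{5\sqrt2\pi})^{1/4}$ and the radicand $\tfrac{5\sqrt2\pi}{32}((c-w)^4-c^4)+2h$ behave well: at $\epsilon=0$, $(h_0,c_0)$, the radicand at the upper endpoint equals $\tfrac{5\sqrt2\pi}{32}\bigl((c_0-y)^4-c_0^4\bigr)+\tfrac14$ with $y$ the upper limit, and by the defining equation $g(c_0)=1$ this is a genuine (possibly improper but convergent) integral; here too the endpoint is a smooth function of $(h,c)$ near $(h_0,c_0)$ as long as $c^4-\tfrac{64h}{5\sqrt2\pi}>0$, which holds at $(h_0,c_0)$ and hence on a neighborhood.

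The crucial step — and the one I expect to be the main obstacle — is verifying that $\partial_{(h,c)}F$ is nonsingular at $(h_0,c_0,0)$. Note $f_1(h,c,0)=\int_0^h(2z)^{-1/2}dz-\tfrac12=\sqrt{2h}-\tfrac12$ depends on $h$ alone, with $\partial_h f_1(h_0,c_0,0)=(2h_0)^{-1/2}=1\neq0$ and $\partial_c f_1(h_0,c_0,0)=0$. Hence the Jacobian is lower-triangular and it suffices to show $\partial_c f_2(h_0,c_0,0)\neq0$. After the change of variables used in the proof of Proposition 6.1, $f_2(h_0,c,0)+\tfrac12$ is (up to the additive constant and rescaling) $\tfrac12 g(c)$ where $g$ is the function analyzed there; so I need $g'(c_0)\neq0$. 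I would obtain this either by differentiating the explicit representation $g(c)=\sqrt{\tfrac{128}{5\sqrt2\pi}}\,(c^4-\tfrac{8}{5\sqrt2\pi})^{-1/4}\int_1^{(1-\frac{8}{5\sqrt2\pi c^4})^{-1/4}}\frac{dz}{\sqrt{z^4-1}}$ and checking the derivative does not vanish — the prefactor is strictly decreasing while the integral is strictly increasing in $c$, so one must rule out an exact cancellation at $c_0$ — or, more robustly, by showing $g$ is strictly monotone on its whole domain $\bigl(\bigl(\tfrac{8}{5\sqrt2\pi}\bigr)^{1/4},\infty\bigr)$. Strict monotonicity of $g$ would simultaneously give uniqueness of $c_0$ and $g'(c_0)\neq0$. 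I expect strict monotonicity to follow from a direct sign computation of $g'$ after differentiating under the integral sign, using that both competing effects push $g$ in a consistent direction once the boundary term from the moving upper limit is accounted for; if a clean sign argument is elusive, an alternative is to perturb the ansatz exponent $5/2$ or to invoke a degree-theoretic/continuity argument in place of the implicit function theorem, trading smoothness of the solution curve for mere existence.
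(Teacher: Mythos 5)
Your proposal takes exactly the paper's route: apply the implicit function theorem at the $\epsilon=0$ solution $(h_0,c_0)=(\tfrac18,c_0)$, observe that the Jacobian $\partial_{(h,c)}(f_1,f_2)$ is triangular because $\partial_c f_1(h_0,c_0,0)=0$, and reduce the invertibility condition to $\partial_h f_1\neq 0$ (trivial) and $\partial_c f_2(h_0,c_0,0)\neq 0$. The paper does the same.

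The one place where you leave a loose end is the sign of $g'(c_0)$, where you worry that the prefactor $\left(c^4-\tfrac{8}{5\sqrt2\pi}\right)^{-1/4}$ is decreasing while the integral $\int_1^{(1-\frac{8}{5\sqrt2\pi c^4})^{-1/4}}\frac{dz}{\sqrt{z^4-1}}$ is increasing, so that ``one must rule out an exact cancellation.'' That worry is based on a sign error: as $c$ increases, $\frac{8}{5\sqrt2\pi c^4}$ decreases, so the upper limit $\left(1-\frac{8}{5\sqrt2\pi c^4}\right)^{-1/4}$ \emph{decreases} toward $1$, and the integral therefore also decreases. Both factors in $g(c)$ are positive and strictly decreasing on $\left(\left(\tfrac{8}{5\sqrt2\pi}\right)^{1/4},\infty\right)$, so $g$ is strictly decreasing, $g'(c_0)<0$, and there is no cancellation to rule out. (The paper itself asserts $\partial_c f_2<0$ without comment; this one-line monotonicity argument is what justifies it, and it also gives uniqueness of $c_0$ for free.) A smaller arithmetic slip: $\partial_h f_1(h_0,c_0,0)=(2h_0)^{-1/2}=(1/4)^{-1/2}=2$, not $1$, though of course only nonvanishing matters.
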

	\begin{proof}
		To prove the existence of the pair $(h, c)$ for $\epsilon$ near 0, we use the implicit function theorem. We should check that the Jacobian does not vanish. We calculate the coefficients as follows.
		\begin{flalign*}
			\left.\frac{\partial}{\partial h}f_1\right\vert_{(\frac{1}{8}, c_0, 0)}=2,  \qquad \left.\frac{\partial}{\partial c}f_1\right\vert_{(\frac{1}{8}, c_0, 0)}=0
		\end{flalign*}
		\begin{flalign*}
			\frac{\partial}{\partial c}f_2|_{(\frac{1}{8}, c_0, 0)}&=\left.\frac{\partial}{\partial c}\left\{ \int_{0}^{c-\left(c^4-\frac{8}{5\sqrt{2}\pi} \right)^{\frac{1}{4}}}\frac{dw}{\sqrt{\frac{5\sqrt{2}\pi}{32}((c-w)^4-c^4)+\frac{1}{4}}} \right\}\right\vert_{c=c_0}
			\\&=\left.\frac{\partial}{\partial c}\left\{ \sqrt{\frac{32}{5\sqrt{2}\pi}}\left(c^4-\frac{8}{5\sqrt{2}\pi}\right)^{-\frac{1}{4}}\int_{1}^{\left(1-\frac{8}{5\sqrt{2}\pi c^4}\right)^{-\frac{1}{4}}}\frac{dz}{\sqrt{z^4-1}} \right\}\right\vert_{c=c_0}<0.
		\end{flalign*}
		\indent
		The Jacobian determinant is 
		\begin{flalign*}
			\left.\det\begin{bmatrix}
				\frac{\partial f_1}{\partial h} & \frac{\partial f_1}{\partial c}\\
				\frac{\partial f_2}{\partial h} & \frac{\partial f_2}{\partial c}
			\end{bmatrix}\right\vert_{(\frac{1}{8}, c_0, 0)} <0.
		\end{flalign*}
		\indent
		Therefore, by the implicit function theorem, there is some $\epsilon_0>0$ such that for all $0<\epsilon<\epsilon_0$, the solution 
		$$f_1(h, c, \epsilon)=0\quad f_2(h, c, \epsilon)=0$$
		exists and satisfies $h>0$, $c>0$ due to the continuous dependence on $\epsilon$. This proves the proposition.
	\end{proof}
	Since we only know that the existence of such a pair is a necessary condition for the existence of a steady state solution, we shall prove that the solution pair really corresponds to the steady state solution. We will show that this is indeed true.
	\begin{proposition}
		Suppose that $0<\epsilon<\epsilon_0$. Let $(h_\epsilon, c_\epsilon)$ be the solution for the equation obtained above. Define $U_{\epsilon}(x)$ as follows:
		\begin{flalign*}
			&x=\int_{0}^{U_\epsilon(x)}\frac{dy}{\sqrt{{\frac{5\sqrt{2}\pi}{32}(-c_\epsilon^4+(c_\epsilon-y)^4)+\frac{2y}{\epsilon}}}} \quad \text{when } 0<x<\frac{\epsilon}{2}
			\\&x-\frac{\epsilon}{2}=\int_{\epsilon h_{\epsilon}}^{U_{\epsilon}(x)}\frac{dy}{\sqrt{\frac{5\sqrt{2}\pi}{32}(-c_\epsilon^4+(c_\epsilon-y)^4)+2h_\epsilon}} \quad\text{when } \frac{\epsilon}{2}\leq x\leq \frac{1}{2}.
		\end{flalign*} 
		Then, the function $U_{\epsilon}$ is well defined, nonnegative, and is a solution for the stationary Vlasov Poisson equation corresponding to ion distribution $\rho_{\epsilon}(x)$.	
	\end{proposition}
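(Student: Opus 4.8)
The plan is to verify the four assertions in turn---well-definedness of the two implicit relations, $C^1$ (indeed $C^{1,1}$) regularity with matching across $x=\tfrac{\epsilon}{2}$, nonnegativity, and that $U_\epsilon$ solves the reduced Poisson equation---and then read off that $f^*(x,v)=\varphi\!\left(\tfrac12 v^2+U_\epsilon(x)\right)$ is a stationary solution.

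First I would set $G_1(y)=\frac{5\sqrt2\pi}{32}\big((c_\epsilon-y)^4-c_\epsilon^4\big)+\frac{2y}{\epsilon}$ and $G_2(y)=\frac{5\sqrt2\pi}{32}\big((c_\epsilon-y)^4-c_\epsilon^4\big)+2h_\epsilon$, the radicands in the two defining integrals, and establish their positivity on the relevant ranges. Since $h_\epsilon\to\tfrac18$ and $c_\epsilon\to c_0$ as $\epsilon\to0$, shrinking $\epsilon_0$ makes $G_1'(y)=-\frac{5\sqrt2\pi}{8}(c_\epsilon-y)^3+\frac2\epsilon>0$ on $[0,\epsilon h_\epsilon]$, so $G_1>0$ on $(0,\epsilon h_\epsilon]$ because $G_1(0)=0$. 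One checks $G_1(\epsilon h_\epsilon)=G_2(\epsilon h_\epsilon)>0$ and, using the relation $U_\epsilon(\tfrac12)=c_\epsilon-\big(c_\epsilon^4-\frac{64h_\epsilon}{5\sqrt2\pi}\big)^{1/4}$, that $G_2\big(U_\epsilon(\tfrac12)\big)=0$; since $c_0^4>\frac{8}{5\sqrt2\pi}=\frac{64h_0}{5\sqrt2\pi}$ (this is exactly the condition making $g$ real in the previous proposition), continuity gives $c_\epsilon^4>\frac{64h_\epsilon}{5\sqrt2\pi}$, hence $U_\epsilon(\tfrac12)<c_\epsilon$, and $G_2$ is strictly decreasing and positive on $[\epsilon h_\epsilon,U_\epsilon(\tfrac12))$. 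The integrands are integrable at the two bad endpoints because $G_1(y)\sim\frac2\epsilon y$ near $y=0$ and $G_2(y)\sim \text{const}\cdot(U_\epsilon(\tfrac12)-y)$ near $y=U_\epsilon(\tfrac12)$. Thus $y\mapsto\int_0^y G_1^{-1/2}$ and $y\mapsto\frac\epsilon2+\int_{\epsilon h_\epsilon}^y G_2^{-1/2}$ are continuous and strictly increasing, and---precisely because $(h_\epsilon,c_\epsilon)$ solves the reduced system, i.e. $\int_0^{\epsilon h_\epsilon}G_1^{-1/2}=\tfrac\epsilon2$ and $\tfrac\epsilon2+\int_{\epsilon h_\epsilon}^{U_\epsilon(1/2)}G_2^{-1/2}=\tfrac12$---inverting them gives a single-valued $U_\epsilon\colon[0,\tfrac\epsilon2]\to[0,\epsilon h_\epsilon]$ and $U_\epsilon\colon[\tfrac\epsilon2,\tfrac12]\to[\epsilon h_\epsilon,U_\epsilon(\tfrac12)]$ that agree at $x=\tfrac\epsilon2$ with common value $\epsilon h_\epsilon$.

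Next, differentiating the defining relations yields $U_\epsilon'(x)=\sqrt{G_1(U_\epsilon(x))}$ on $(0,\tfrac\epsilon2)$ and $U_\epsilon'(x)=\sqrt{G_2(U_\epsilon(x))}$ on $(\tfrac\epsilon2,\tfrac12)$; since $G_1(\epsilon h_\epsilon)=G_2(\epsilon h_\epsilon)$, $G_1(0)=0$ and $G_2(U_\epsilon(\tfrac12))=0$, the derivative extends continuously with $U_\epsilon'(0)=U_\epsilon'(\tfrac12)=0$, so $U_\epsilon\in C^1([0,\tfrac12])$ and is strictly increasing, whence $0\le U_\epsilon\le U_\epsilon(\tfrac12)<c_\epsilon$. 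Differentiating once more, $U_\epsilon''=\tfrac12 G_1'(U_\epsilon)=\tfrac1\epsilon-\frac{5\sqrt2\pi}{16}(c_\epsilon-U_\epsilon)^3$ on $(0,\tfrac\epsilon2)$ and $U_\epsilon''=\tfrac12 G_2'(U_\epsilon)=-\frac{5\sqrt2\pi}{16}(c_\epsilon-U_\epsilon)^3$ on $(\tfrac\epsilon2,\tfrac12)$, i.e. $U_\epsilon''=\rho_\epsilon-\frac{5\sqrt2\pi}{16}(c_\epsilon-U_\epsilon)^3$ a.e., the jump of $U_\epsilon''$ at $x=\tfrac\epsilon2$ matching exactly the jump of $\rho_\epsilon$, so $U_\epsilon\in C^{1,1}$. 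Because $U_\epsilon(x)<c_\epsilon$ everywhere, the computation $\int_{\mathbb R}\varphi\!\left(\tfrac12 v^2+U_\epsilon(x)\right)dv=\frac{5\sqrt2\pi}{16}(c_\epsilon-U_\epsilon(x))^3$ carried out in the reduction is valid at every $x$.

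Finally I would extend $U_\epsilon$ to $[\tfrac12,1]$ by $U_\epsilon(x):=U_\epsilon(1-x)$; as $U_\epsilon'(0)=U_\epsilon'(\tfrac12)=0$ this gives a nonnegative $C^{1,1}$ function on $\mathbb T$, even about $0$ and $\tfrac12$, solving $U_\epsilon''=\rho_\epsilon-\frac{5\sqrt2\pi}{16}(c_\epsilon-U_\epsilon)^3$ a.e. Integrating over $\mathbb T$ gives $0=\int_{\mathbb T}U_\epsilon''=1-\int_{\mathbb T}\frac{5\sqrt2\pi}{16}(c_\epsilon-U_\epsilon)^3\,dx$, so the electron density $\rho_e=\int_{\mathbb R}f^*\,dv$ has total mass $1$, matching $\rho_\epsilon$; hence $U_\epsilon'$ is periodic with zero mean, $U_\epsilon$ is a genuine periodic potential, and $E:=U_\epsilon'$ coincides with the field produced by the kernel $K$ against $\rho_\epsilon-\rho_e$. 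Then $v\,\partial_x f^*-E\,\partial_v f^*=v\varphi'U_\epsilon'-U_\epsilon'\varphi' v=0$, so $f^*$ is the asserted stationary solution. The \emph{main obstacle} is the bookkeeping of the first step: showing $G_1,G_2>0$ on the full integration ranges uniformly for $\epsilon<\epsilon_0$ and checking integrability at the two degenerate endpoints; everything downstream is then routine differentiation.
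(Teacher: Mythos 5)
Your proof is correct and follows the same route as the paper's (nonnegativity of the integrands, the solved reduced system, the inverse function theorem, and differentiating the implicit relations), just with the details the paper's one-paragraph sketch leaves implicit fully worked out. In particular you supply the endpoint-integrability checks, the $C^1$ matching across $x=\tfrac\epsilon2$, the reflection extension to $\mathbb{T}$ with the resulting periodicity and mass balance, and you also correct the paper's typo ``nonpositive'' to the intended ``nonnegative.''
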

		\begin{proof}
			That $U_\epsilon$ is well defined, continuous, and piecewise $C^2$ follows from the fact that the integrand is nonnegative, $(h_\epsilon, c_\epsilon)$ is satisfies $f_i(h_\epsilon, c_\epsilon, \epsilon)=0$ for $i=1,2$, and the inverse function theorem. $U_{\epsilon}$ is clearly nonpositive since the integrand is nonnegative and $h_{\epsilon}>0$. The fact that $U_{\epsilon}$ satisfies the Vlasov-Poisson equation follows by differentiating both sides and comparing them.
		\end{proof}
	\indent
	We show below the steady state distribution when the background charge density is ``Dirac delta" centered at the origin, i.e., $\epsilon\to0$. This result is obtained numerically by using Mathematica.\\\\
	\noindent
	\begin{figure}[H]
		\centering
		\includegraphics[scale=0.5]{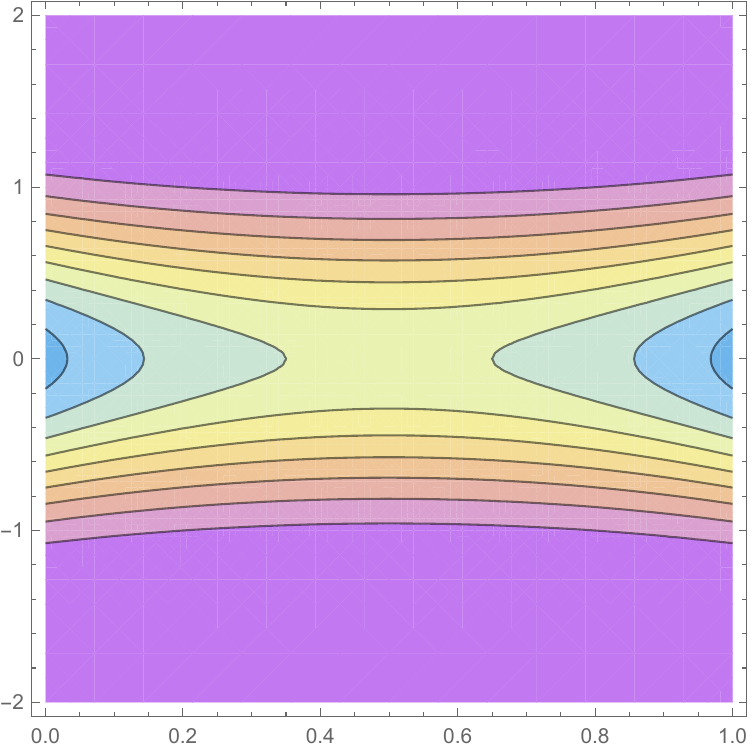} 
		\includegraphics[scale=0.5]{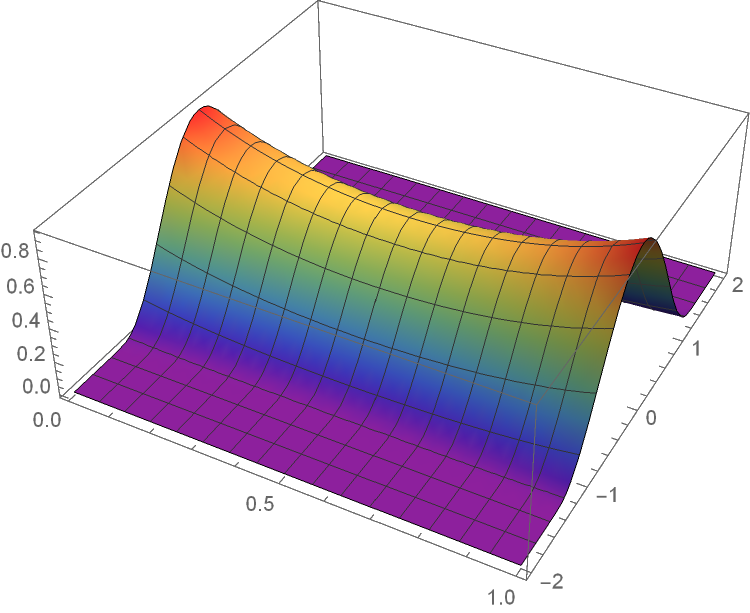}
		\caption{The graph of the steady state distribution in the case of a "Dirac Delta" background charge. The left figure represents the level sets of the distribution, where the $x$ and $y$ axes correspond to the $x$ and $v$ coordinates, respectively. The right figure is the graph of the distribution. Note that there is a discontinuity of derivatives at $x=0$ and $x=1$, which is due to the singularity of the ion distribution there.}
	\end{figure}
	
	\section*{Acknowledgement}
	
	The author would like to express his gratitude and thanks to Professor In-Jee Jeong for his guidance throughout this project. This study was supported by the Samsung Science and Technology Foundation under Project Number SSTF-BA2002-04.

\end{document}